\numberwithin{equation}{section}
\newtheorem{theorem}{Theorem}[section]
\newtheorem{corollary}[theorem]{Corollary}
\newtheorem{proposition}[theorem]{Proposition}
\theoremstyle{definition} 
\newtheorem{definition}[theorem]{Definition}
\newtheorem{assumption}[theorem]{Assumption}
\newtheorem{Remark}[theorem]{Remark}
\newenvironment{remark}{\begin{Remark}\rm}{\end{Remark}}
\newenvironment{example}{\begin{Example}\rm}{\end{Example}}
\newtheorem{Example}[theorem]{Example}
\newcommand{\eq}{\begin{equation}}
\newcommand{\qe}{\end{equation}}
\newcommand{\bs}{\boldsymbol}
\DeclareMathOperator{\diam}{diam}
\DeclareMathOperator{\supp}{supp}
\newcommand{\R}{\mathbb{R}}
\newcommand{\C}{\mathbb{C}}
\newcommand{\J}{\mathcal{J}}
\newcommand{\M}{\mathcal{M}}
\newcommand{\V}{\mathcal{V}}
\newcommand{\s}{\mathcal{S}}
\newcommand{\spac}{\mathcal{M}_{m_1}(\Delta_1)\times\cdots\times\mathcal{M}_{m_d}(\Delta_d)}
\title{Weakly Admissible \\Vector Equilibrium Problems}
\author{
\ Adrien Hardy 
\footnote{Institut de Math\'ematiques de Toulouse, Universit\'e de Toulouse, 31062 Toulouse, France.} \hspace{2 dd}\footnote{Department of Mathematics, Katholieke Universiteit Leuven, Celestijnenlaan 200 B,
3001 Leuven, Belgium. Email addresses: adrien.hardy@wis.kuleuven.be, arno.kuijlaars@wis.kuleuven.be} 
\,
and  
\ Arno B.J.  Kuijlaars 
\footnotemark[\value{footnote}] }
\begin{document}

\maketitle 

\begin{abstract}

We establish lower semi-continuity and strict convexity of the energy functionals for a
large class of vector equilibrium problems  in logarithmic potential theory. This, in particular,
implies the existence and uniqueness of a minimizer for such vector equilibrium problems.
Our work extends earlier results in that we allow unbounded supports without having strongly
confining external fields.  To deal with the possible noncompactness of supports, 
we map the vector equilibrium problem onto the Riemann sphere and our results follow
from a  study of vector equilibrium problems  on compacts in higher dimensions. 
Our results cover a number of cases that were recently considered in random matrix theory
and for which the existence of a minimizer was not clearly established yet.

\end{abstract}

\emph{Keywords :}  Potential theory; Logarithmic energy; Equilibrium problem

\section{Introduction}
A \emph{vector equilibrium problem} in logarithmic potential theory asks to find the minimizer 
of a functional  involving  logarithmic energies of measures lying in a prescribed set.  The origins of vector equilibrium problems lie in the works of Gonchar and Rakhmanov
on Hermite-Pad\'e approximation \cite{GR1, GR2, GRS}, where they are used to describe the limiting
distributions of the poles of the rational approximants \cite{NS}. More recently, vector equilibrium
problems also appeared in random 
models related to multiple orthogonal polynomials, such as random matrix ensembles, or non-intersecting diffusion 
processes;  see the surveys \cite{AK, K} 
and the references cited therein.

The question is to prove the existence and uniqueness of such minimizer. Results are already available in 
the literature \cite{BB, BKMW, NS, ST} but they do not cover yet a wider class of vector equilibrium problems 
arising from random matrix theory, among other things.  
Let us illustrate this by an example. In \cite{DK, DGK, DKM} the two matrix model, which is a model of two 
coupled random matrices, is investigated and the limiting mean eigenvalue distribution of one of the matrices is characterized 
in terms of the following vector equilibrium problem. Minimize  the energy functional
\begin{multline} \label{energy2MM}
 \iint \log\frac{1}{|x-y|}d\mu_1(x)d\mu_1(y) - \iint \log\frac{1}{|x-y|}d\mu_1(x)d\mu_2(y)  \\ 
  + \iint \log\frac{1}{|x-y|}d\mu_2(x)d\mu_2(y) - \iint \log\frac{1}{|x-y|}d\mu_2(x)d\mu_3(y)\\
  + \iint \log\frac{1}{|x-y|}d\mu_3(x)d\mu_3(y) +\int V_1(x)d\mu_1(x) + \int V_3(x)d\mu_3(x)
\end{multline}
over vectors of measures $(\mu_1,\mu_2,\mu_3)$ where $\mu_1$ and $\mu_3$ are measures on $\R$, $\mu_2$ is a 
measure on the imaginary axis $i\R$, and they have respective total masses $\|\mu_1\|=1$, $\|\mu_2\|=2/3$ 
and $\|\mu_3\|=1/3$. Moreover, $\mu_2$ is constrained by a measure $\sigma$ appearing in the problem, 
that is $\sigma-\mu_2$ has to be a (positive) measure. The external fields $V_1$ and $V_3$ in \eqref{energy2MM} 
are given continuous functions on $\mathbb R$ and $V_1$ has polynomial growth at infinity, while $V_3$ has compact support.

The existence of a unique minimizer $(\mu^*_1,\mu^*_2,\mu^*_3)$ plays a crucial role in the two matrix model 
investigation. Indeed, an important step for its asymptotic analysis is to normalize the associated Riemann-Hilbert problem at infinity, a procedure which is possible because of the existence of such a minimizer, and as a consequence the first component $\mu_1^*$ turns out to be the limiting mean eigenvalue distribution of one of the random matrices. Nevertheless, the proof of existence and uniqueness presented in \cite{DK,DKM} is rather complicated and moreover incomplete
since the lower semi-continuity of the energy functional \eqref{energy2MM} was implicitly 
assumed but not proved. 
There are other random models for which the existence of a unique minimizer for an associated vector equilibrium problem 
has not clearly been established and which will be covered by this work. 
Examples are non-intersecting squared Bessel 
paths models \cite{DKRZ, KMW} and a Hermitian random matrix model with an external source \cite{BDK}.

In the recent paper \cite{BKMW}, Beckermann et al.\ establish lower semi-continuity and existence of minimizers 
for vector equilibrium problems in situations more general than known before, 
but under an hypothesis of compactness (namely the presence of strongly confining external fields, in case
of unbounded sets) which is not present in the example \eqref{energy2MM}. It is the aim of this work to extend the methods
of \cite{BKMW} so as to cover the above examples. We restrict in this work to positive definite
interaction matrices, while the work \cite{BKMW} also includes semi-definite interaction matrices.

\section{Vector equilibrium problems on the complex plane}
We first introduce few definitions commonly used in  logarithmic potential theory. 
\subsection{Notions from potential theory}
For a measure $\mu$ on $\mathbb C$, the \emph{logarithmic energy} is defined by
\begin{equation}
\label{logenergy}
I(\mu) = \iint \log\frac{1}{|x-y|}d\mu(x)d\mu(y),
\end{equation}
and the \emph{logarithmic potential} at $x\in\mathbb C$ by
\begin{equation} 
\label{logpotential}
U^{\mu}(x)=\int\log\frac{1}{|x-y|}d\mu(y),
\end{equation}
whenever these integrals make sense. 
Here and in the following, by a \emph{measure} we always mean 
a positive finite Borel measure. Moreover, for two measures $\mu$ and $\nu$ on $\C$, their \emph{mutual energy} is given by
\begin{equation}
\label{mutualenergy}
I(\mu,\nu) = \iint \log\frac{1}{|x-y|}d\mu(x)d\nu(y),
\end{equation}
so that $I(\mu) = I(\mu,\mu)$.
These definitions are naturally extended to signed measures.

For a closed subset $\Delta \subset \mathbb C$ and a positive number $m > 0$, we use 
$\mathcal{M}_m(\Delta)$ to denote the set of measures $\mu$ having support $\supp(\mu)\subset\Delta$ and total mass $\|\mu\| = m$. 
Such a set $\mathcal{M}_m(\Delta)$ will always be equipped with its weak 
topology (i.e., the topology coming from duality with the Banach space of bounded continuous functions on $\Delta$). 
The Cartesian product $\spac$ of such sets carries the product topology. 

A closed subset $\Delta$ of $\C$ has \emph{positive capacity} if there exists a measure 
with support in $\Delta$  having finite logarithmic energy. 

\subsection{The class of weakly admissible vector equilibrium problems}
Let us now precise the assumptions for vector equilibrium problems concerned in this Section. Fix an integer $d\geq 1$.

\begin{assumption}\textbf{(Weak admissibility)}
\label{def1}\\
We make the following assumptions :
\begin{enumerate}
\item[(a)] 
$C=(c_{ij})$ is a $d\times d$ real symmetric positive definite matrix.
\item[(b)] 
$\bs \Delta = (\Delta_1,\ldots, \Delta_d)^t$ is a vector of closed subsets  of  $\C$ each having positive capacity.
\item[(c)]
$\bs V=(V_1, \ldots, V_d)^t$ is a vector of external fields  where each 
$V_i : \Delta_i\to  \R\cup\{+\infty\}$ is lower semi-continuous and finite on a set of positive capacity.
\item[(d)]
$\bs m=(m_1,\ldots,m_d)^t$ is a vector of positive numbers  such that
\begin{equation}
\label{weakadmis}
\liminf_{|x|\to \infty,\;x\in\Delta_i}\left(V_i(x)-\Big(\sum_{j=1}^dc_{ij}m_j\Big)\log(1+|x|^2)\right)>-\infty
\end{equation}
for every $i=1, \ldots , d$, provided $\Delta_i$ is unbounded. 
\end{enumerate}
\end{assumption}
\noindent
Given $C$, $\bs V$, $\bs \Delta$, $\bs m$ satisfying Assumption \ref{def1},
a  \emph{weakly admissible vector equilibrium problem} asks for minimizing the functional
\begin{equation}
\label{vep1}
\J_{\bs V}(\mu_1,\ldots,\mu_d)=\sum_{1\leq i,j \leq d }c_{ij}I(\mu_i,\mu_j)+\sum_{i=1}^d\int V_i(x)d\mu_i(x)
\end{equation}
over vectors of measures $(\mu_1,\ldots,\mu_d)$ lying in $\spac$, or in a subset thereof. 
The terminology \emph{weakly admissible} mainly refers to the growth conditions \eqref{weakadmis}, since it weakens all the growth assumptions presented in the literature, see also Remark \ref{eqpb} below. Indeed, it is assumed in \cite{NS} that the $\Delta_i$'s are compact sets, and both \cite{BKMW} and \cite[Section VIII]{ST} require for unbounded $\Delta_i$'s the stronger growth condition
\eq
\label{stronggrowthcond}
\lim_{|x|\rightarrow\infty,\;x\in\Delta_i}\frac{V_i(x)}{\log(1+|x|^2)}=+\infty, \qquad i\in\{1,\ldots,d\},
\qe
implying \eqref{weakadmis} for any $\bs m$.

Moreover, note that there is no condition on the relative positions of the sets $\Delta_i$. They
could be disjoint (as assumed in \cite[Proposition V.4.1]{NS} and \cite[Theorem VIII.1.4]{ST} in case of attraction), 
but they could also overlap, even in case of attraction (i.e. $c_{ij} < 0$)
between the measures on $\Delta_i$ and $\Delta_j$. This feature is also present
in the work \cite{BKMW}.

\begin{example} \textbf{(Two matrix model)} \label{2matmodel}\\
The vector equilibrium problem for the functional \eqref{energy2MM} has the input data
\[
C= 
\begin{bmatrix} 1 & -1/2 & 0 \\
 -1/2 & 1 & -1/2\\
 0 & -1/2 & 1
\end{bmatrix},
\qquad \bs \Delta = \begin{pmatrix} \R \\ i\R \\ \R \end{pmatrix}, \quad 
\bs V = \begin{pmatrix} V_1 \\ 0 \\ V_3 \end{pmatrix}, \quad 
\bs m = \begin{pmatrix} 1 \\ 2/3 \\ 1/3 \end{pmatrix}
\]
which clearly satisfies  the conditions  (a), (b), and (c) of Assumption \ref{def1}. 
Since $C\bs m = \begin{pmatrix} 2/3 & 0 & 0 \end{pmatrix}^t$ we have
\[
	\bs V- C\bs m \log(1+|x|^2)= 
		\begin{pmatrix} V_1(x)-\frac{2}{3}\log(1+|x|^2) \\ 0 \\ V_3(x) \end{pmatrix}
\]
which means that condition (d) is satisfied as well, since there exists positive constants $c_1$, $c_2$ and $\alpha$ such that $V_1(x)\geq c_1|x|^\alpha-c_2$, and $V_3$ has a compact support. Thus the vector equilibrium problem is 
weakly admissible.
\end{example}

\begin{example} \textbf{(Banded Toeplitz matrices)} \label{Toeplitz} \\
A banded Toeplitz matrix $T_n$ with $p \geq 1$ upper and $q \geq 1$ lower diagonals has the form
\begin{equation}
   \big(T_n \big)_{jk}=a_{j-k},
    \qquad j,k = 1, \ldots, n,
\end{equation}
where $a_p a_{-q} \neq 0$ and $a_k = 0$ for $k \geq p+1$ and $k \leq -q - 1$. The limiting 
eigenvalue distribution of the matrices $T_n$ as the size $n$ tends to infinity
was characterized in \cite{DK0} by means of a vector equilibrium problem
with $d = p + q - 1$ measures without external fields $V_i$. 
The interaction matrix (which is tridiagonal) and the vector of masses are
\[ C = \begin{bmatrix}   1 & -\frac{1}{2} & 0 & \cdots & \cdots & 0 \\
 		- \frac{1}{2} & 1 & -\frac{1}{2} & &   & \vdots \\
 		0 & - \frac{1}{2} & 1 & \ddots & & \vdots \\ 
 		\vdots & & \ddots & \ddots & \ddots & 0 \\ 
 		\vdots & &  & \ddots & 1 & - \frac{1}{2} \\
 		0 & \cdots & \cdots & 0 & -\frac{1}{2} & 1 
\end{bmatrix}, \qquad \bs m = \begin{pmatrix} \tfrac{1}{q} \\ \vdots \\ \tfrac{q-1}{q} \\ 1 \\ 
	\tfrac{p-1}{p} \\ \vdots \\ \tfrac{1}{p} \end{pmatrix}. \] 
The sets $\Delta_i$ are curves in the complex plane, where $\Delta_q$ is compact but 
the others are unbounded. Note that all entries of $C \bs m$ are zero except for 
\[ \bigl( C \bs m \bigr)_q = 1 - \tfrac{q-1}{2q} - \tfrac{p-1}{2p} \geq 0. \]  
Since $\Delta_q$ is bounded,  the conditions of Assumption \ref{def1} are
satisfied even though the external fields are all absent. The corresponding vector equilibrium problem is weakly admissible. 

See \cite{Del,DD} for extensions to rational Toeplitz matrices and block Toeplitz matrices
which lead to a number of interesting variations on the above vector equilibrium problem.
\end{example}

\begin{remark} \textbf{(Scalar equilibrium problems)} 
\label{eqpb}\\
In the scalar case $d=1$ one may assume without loss of generality that $c_{11}=m_1 = 1$. 
Then the energy functional \eqref{vep1} with $V_1 = V$ and $\mu_1 = \mu$ reduces to
\[ \iint \log \frac{1}{|x-y|} \, d\mu(x) d\mu(y) + \int V(x) \, d\mu(x) \]
which differs from the one in \cite{ST} by a factor $2$ in the external field term. In 
the setting of \cite{ST} the external field is associated with the weight 
$w(x) = e^{-\frac{1}{2} V(x)}$, and then the equilibrium problem is called \emph{admissible} if
\[ \lim_{|x| \to \infty} |x| w(x) = 0, \]
which means that the left-hand side of \eqref{weakadmis} is equal to $+\infty$.
In \cite{Si} the scalar equilibrium problem is called \emph{weakly admissible} if
\[ \lim_{|x| \to \infty} |x| w(x) = \gamma > 0. \]
Observe that \eqref{weakadmis} is more general, since we do not require that
the limit of $V(x) - \log(1+ |x|^2)$ as $|x| \to \infty$ exists.
\end{remark}

\subsection{Extension of the energy functional definition}
\label{extension}
Note that  the energy functional \eqref{vep1} is not well defined for all measures 
since logarithmic energies may take the values $+\infty$ and $-\infty$ 
(the latter cannot happen for measures with compact support). 
One may restrict to measures satisfying the condition
\begin{equation}
\label{cond}
\qquad I(\mu)<+\infty \qquad \mbox{and} \qquad \int \log(1+|x|) \, d\mu(x)<+\infty
\end{equation}
so that \eqref{vep1} is always well defined, as it is done in \cite{BKMW, NS}. But it is also 
possible to extend naturally the definition of $\J_{\bs V}(\mu_1,\ldots,\mu_d)$ to situations 
where \eqref{cond} is not satisfied.  

We extend the energy functional \eqref{vep1} by mapping the vector equilibrium problem onto the Riemann sphere by means of 
inverse stereographic projection. Namely, let 
\begin{equation} \label{sphere}
 \s = \Big\{ (x_1, x_2, x_3) \in \R^3 \mid x_1^2 + x_2^2 + (x_3- \tfrac{1}{2})^2 = \tfrac{1}{4} \Big\}
\end{equation} 
be the sphere in $\R^3$ centered in $(0,0,1/2)$ with radius $1/2$ and $T:\C\cup\{\infty\}\to  \s$ the homeomorphism defined by 
\begin{equation} \label{mapT}
\begin{aligned}
	T(x)\,  & = \left(
		\frac{\mathrm{Re}(x)}{1+|x|^2}, \frac{\mathrm{Im}(x)}{1+|x|^2}, \frac{|x|^2}{1+|x|^2}\right), & x \in \C \\
	\end{aligned}
\end{equation}
and $T(\infty) = (0,0,1)$. Then, the following metric relation holds, see \cite[Lemma 3.4.2]{Ash},
\begin{equation}
\label{metric}
|T(x)-T(y)|=\frac{|x-y|}{\sqrt{1+|x|^2}\sqrt{1+|y|^2}}, \qquad x, y \in \C,
\end{equation}
where $| \cdot |$ denotes the Euclidean norm. 

For a measure $\mu$ on $\C$ we use $T_*\mu$  to denote its push forward by $T$, that is, 
$T_*\mu$ is the measure on $\s$ characterized by
\begin{equation}
\label{pushf}
\int f(s) \, dT_*\mu(s) = \int f\big(T(x)\big) \, d\mu(x)
\end{equation}
for every Borel function $f$ on $\s$. If $\mu$ and $\nu$ are two measures 
on $\C$ satisfying the condition \eqref{cond}, then \eqref{metric}, \eqref{pushf} easily yield
\begin{equation} 
\label{energyonsphere}
I\big(T_*\mu,T_*\nu\big) = I(\mu,\nu) + \frac{1}{2}\|\nu\|\int\log(1+|x|^2) \, d\mu(x) +\frac{1}{2}\|\mu\|\int\log(1+|x|^2)\, d\nu(x).
\end{equation}
As a consequence, we obtain for $\mu_i$'s which  satisfy  \eqref{cond}
\begin{equation}
\label{exten}
\J_{\bs V}(\mu_1,\ldots,\mu_d) = \sum_{1\leq i,j \leq d }c_{ij}I(T_*\mu_i,T_*\mu_j)+\sum_{i=1}^d\int \mathcal{V}_i(x) \, dT_*\mu_i(x)
\end{equation}
where the new external fields $\V_i : T(\Delta_i)\to \R\cup\{+\infty\}$ are defined by 
\begin{equation}
\label{defV1}
\V_i \big(T(x)\big) = V_i(x) - \Big(\sum_{j=1}^dc_{ij}m_j\Big)\log(1+|x|^2), \qquad x \in \Delta_i.
\end{equation}
The condition \eqref{weakadmis} thus states that the $\V_i$'s are bounded from below. 
In case $\Delta_i$ is unbounded, we extend the definition of $\V_i$ by putting
\begin{equation}
\label{defV2}
\V_i(0,0,1) = \liminf_{|x| \to \infty,\;x\in\Delta_i} \left(  V_i(x) - \Big(\sum_{j=1}^dc_{ij}m_j\Big)\log(1+|x|^2) \right).
\end{equation}
Then $\V_i$ is a lower semi-continuous function defined on a closed subset of $\s$.
Thus, \eqref{exten} motivates the following definition.

\begin{definition}
\label{def2}
We extend the definition of the energy functional  \eqref{vep1} to all vectors of measures in $\spac$ by setting
\begin{multline}
\label{vep3}
\J_{\bs V}(\mu_1,\ldots,\mu_d)
	=  \displaystyle \sum_{1\leq i,j \leq d }c_{ij}I(T_*\mu_i,T_*\mu_j)
		+ \sum_{i=1}^d\int \mathcal{V}_i(x) \, dT_*\mu_i(x)  \\
		\textrm{if } I(T_*\mu_i) < +\infty \textrm{ for every } i = 1, \ldots, d,
\end{multline}
where the $\V_i$'s are defined by \eqref{defV1} and \eqref{defV2}, and
\begin{equation} \label{vep4}
	\J_{\bs V}(\mu_1,\ldots,\mu_d) = + \infty \quad \textrm{otherwise.}
		\end{equation}		
\end{definition}

The main result of this work is the following.

\begin{theorem}
\label{th1}
Let $C$, $\bs\Delta$, $\bs V$ and $\bs m$ satisfy Assumption \ref{def1},
and let $\J_{\bs V}$ be the associated energy functional given by \eqref{vep3}, \eqref{vep4} in Definition \ref{def2}.
Then the following hold :
\begin{enumerate}
\item[{\rm (a)}] The sub-level set 
\begin{equation} \label{JVsublevel} 
	\Big\{ (\mu_1, \ldots, \mu_d) \in \spac \mid \J_{\bs V}(\mu_1, \ldots, \mu_d) \leq \alpha \Big\} 
\end{equation}
is compact for every $\alpha \in \mathbb R$. In particular $\J_{\bs V}$ is lower semi-continuous.

\item[{\rm (b)}] $\J_{\bs V}$ is strictly convex on the set where it is finite.

\end{enumerate}
\end{theorem}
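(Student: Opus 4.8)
The plan is to carry out the whole analysis on the Riemann sphere, where the sets $T(\Delta_i)$ sit inside the compact $\s$ and the logarithmic kernel turns nonnegative: since $\s$ has diameter $1$, the metric identity \eqref{metric} gives $\log\frac{1}{|T(x)-T(y)|}\ge 0$. Writing $\nu_i=T_*\mu_i$ and $K_i=\overline{T(\Delta_i)}\subset\s$ (a compact set, equal to $T(\Delta_i)$ together with the north pole $(0,0,1)$ when $\Delta_i$ is unbounded), Definition \ref{def2} presents $\J_{\bs V}$ as the functional $\sum_{i,j}c_{ij}I(\nu_i,\nu_j)+\sum_i\int\V_i\,d\nu_i$ on the product $\prod_i\M_{m_i}(K_i)$, where each $\V_i$ is lower semi-continuous and, by \eqref{weakadmis}, bounded from below on $K_i$. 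Each factor $\M_{m_i}(K_i)$ with its weak-$*$ topology is compact (Banach--Alaoglu/Helly, using compactness of $K_i$), hence so is the product.

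For part (a) the external-field terms $\nu_i\mapsto\int\V_i\,d\nu_i$ are weak-$*$ lower semi-continuous because $\V_i$ is lower semi-continuous and bounded below. The crux is the lower semi-continuity of the quadratic form $Q(\nu_1,\ldots,\nu_d)=\sum_{i,j}c_{ij}I(\nu_i,\nu_j)$, and here the main obstacle is that the off-diagonal $c_{ij}$ may be negative, so $Q$ is not lower semi-continuous term by term. I would exploit positive definiteness of $C$ via a Cholesky factorization $C=B^tB$, rewriting $Q=\sum_k I(\zeta_k)$ with $\zeta_k=\sum_j B_{kj}\nu_j$ a signed measure. Because the masses $\|\nu_i\|=m_i$ are fixed, subtracting a reference vector $(\nu_1^0,\ldots,\nu_d^0)$ in the same fixed-mass product whose components have continuous potentials makes the bilinear cross term $\mathcal B\big((\nu_i^0),(\nu_i-\nu_i^0)\big)=\sum_j\int\big(\sum_i c_{ij}U^{\nu_i^0}\big)\,d(\nu_j-\nu_j^0)$ weak-$*$ continuous and reduces the question to the lower semi-continuity of $\sum_k I(\zeta_k)$ for the charge-neutral differences $\zeta_k$. (Producing such a reference, or a family rich enough to realize $Q$ as the supremum of its continuous tangent functionals, is a standard approximation point.)

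For a charge-neutral signed measure $\zeta$ the Frullani representation $\log\frac1r=\int_0^\infty(e^{-tr}-e^{-t})\frac{dt}{t}$ gives $I(\zeta)=\int_0^\infty\frac1t\Big(\iint e^{-t|s-u|}\,d\zeta(s)\,d\zeta(u)\Big)dt$, the constant $e^{-t}$ dropping out because $\zeta(\s)=0$. By Schoenberg's theorem $e^{-t|s-u|}$ is a positive-definite kernel on $\R^3$, so the bracketed integrand, summed over $k$, is nonnegative for every $t$ while being weak-$*$ continuous in $(\nu_1,\ldots,\nu_d)$ for fixed $t$; Fatou's lemma then delivers the lower semi-continuity of $\sum_k I(\zeta_k)$, hence of $Q$ and of $\J_{\bs V}$. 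Consequently the sub-level set is closed in the compact product and therefore compact on the sphere. To return to $\spac$ I would use that $\J_{\bs V}\le\alpha<\infty$ forces $I(T_*\mu_i)<\infty$: a finite-energy measure cannot charge the north pole (a single point has zero capacity), so any weak-$*$ limit $\nu_i^*$ puts no mass at $(0,0,1)$, equals $T_*\mu_i^*$ for some $\mu_i^*\in\M_{m_i}(\Delta_i)$, and the absence of escaping mass yields tightness, upgrading weak-$*$ convergence on $\s$ to convergence in the weak topology of $\spac$. This proves compactness of the sub-level set \eqref{JVsublevel}.

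For part (b), on the set where $\J_{\bs V}$ is finite all individual energies are finite, so the quadratic-plus-linear structure gives the parallelogram identity $\J_{\bs V}\big(\tfrac{\mu+\nu}{2}\big)=\tfrac12\J_{\bs V}(\mu)+\tfrac12\J_{\bs V}(\nu)-\tfrac14 Q(\mu-\nu)$, with the linear external-field contributions cancelling. Writing again $C=B^tB$ and $\tau_k=\sum_j B_{kj}(\mu_j-\nu_j)$, each $\tau_k$ is charge-neutral and $Q(\mu-\nu)=\sum_k I(\tau_k)\ge 0$, with equality precisely when all $\tau_k=0$. Since $(\mu_1,\ldots,\mu_d)\neq(\nu_1,\ldots,\nu_d)$ and $B$ is invertible, some $\tau_k\neq0$, so $Q(\mu-\nu)>0$ and strict convexity follows. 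The only classical input is the strict positivity of logarithmic energy on nonzero charge-neutral signed measures of finite energy, which also underlies the nonnegativity used in part (a).
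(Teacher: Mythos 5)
Your global strategy is the same as the paper's: push everything to the sphere by $T$, use compactness of $\M_{m_1}(\s)\times\cdots\times\M_{m_d}(\s)$, prove lower semi-continuity and strict convexity there, and pull back via the fact that finite-energy measures cannot charge the north pole. Your part (b) (Cholesky factorization plus positivity of the energy of charge-neutral signed measures) is essentially the paper's Proposition \ref{posdefJ}. Where you genuinely diverge is the core lower semi-continuity mechanism: the paper regularizes the kernel by translating measures into an extra dimension, replacing $\log\frac{1}{|x-y|}$ by the continuous kernel $\log\frac{1}{\sqrt{|x-y|^2+r^2}}$ and letting $r\to 0$ (Proposition \ref{prooflsc2}), while you subtract a reference vector with continuous potentials (this exists, e.g.\ $\nu_i^0=m_i\times$ normalized surface measure on $\s$) and then use the Frullani representation, Schoenberg positivity of $e^{-t|s-u|}$, and Fatou. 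That part of your argument is sound and is an attractive alternative to the paper's smoothing trick.

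The genuine gap is at the points where the functional is $+\infty$ by the convention \eqref{vep4}, i.e.\ where some $I(T_*\mu_i)=+\infty$. Your Fatou argument proves lower semi-continuity of the Frullani-defined functional $\widehat{Q}(\bs\nu)=\sum_k\int_0^\infty\frac{1}{t}\iint e^{-t|s-u|}\,d\zeta_k(s)\,d\zeta_k(u)\,dt$ (plus weak-$*$ continuous terms), and Fubini identifies $\widehat{Q}$ with the energy functional only when all components have finite energy. When some $I(\nu_{i_0})=+\infty$ the paper's functional is declared $+\infty$, but you have not shown $\widehat{Q}=+\infty$ there; since trivially $\widehat{Q}\leq Q$, lower semi-continuity of $\widehat{Q}$ does not transfer to $Q$ (a sub-level set of $Q$ sits inside the closed sub-level set of $\widehat{Q}$ but need not itself be closed). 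The same missing statement is what entitles you to say that a weak-$*$ limit of the $T_*\mu_i^N$ has finite energy, hence no atom at $(0,0,1)$, hence is of the form $T_*\mu_i^*$; without it, mass could still escape to infinity along the sequence. The paper devotes a separate argument to exactly this case (Proposition \ref{prooflsc1}, which bounds the mixed Cholesky terms from below using the equilibrium measure $\omega$ of $K$). Your framework can be patched internally: letting $\lambda_{\min}>0$ be the smallest eigenvalue of $C$, positive semi-definiteness of the kernel $e^{-t|s-u|}$ gives, for each fixed $t$,
\begin{equation*}
\sum_{i,j}c_{ij}\iint e^{-t|s-u|}\,d(\nu_i-\nu_i^0)(s)\,d(\nu_j-\nu_j^0)(u)\;\geq\;\lambda_{\min}\iint e^{-t|s-u|}\,d(\nu_{i_0}-\nu_{i_0}^0)(s)\,d(\nu_{i_0}-\nu_{i_0}^0)(u),
\end{equation*}
and the right-hand side, divided by $t$, equals $a_t-2b_t+c_t$ where $a_t,b_t,c_t$ are the integrals of the nonnegative kernel $(e^{-t|s-u|}-e^{-t})/t$ against $\nu_{i_0}\otimes\nu_{i_0}$, $\nu_{i_0}\otimes\nu_{i_0}^0$, $\nu_{i_0}^0\otimes\nu_{i_0}^0$. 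Since $U^{\nu_{i_0}^0}$ is bounded, $\int_0^\infty b_t\,dt$ and $\int_0^\infty c_t\,dt$ are finite, so the pointwise inequality $a_t\leq(a_t-2b_t+c_t)+2b_t$ forces $\int_0^\infty(a_t-2b_t+c_t)\,dt=+\infty$ whenever $I(\nu_{i_0})=\int_0^\infty a_t\,dt=+\infty$, i.e.\ $\widehat{Q}=+\infty$ wherever $Q=+\infty$. With that supplement your proof is complete.
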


The following is an immediate consequence of Theorem \ref{th1}.

\begin{corollary} \label{cor1}
 The functional $\J_{\bs V}$ admits a unique minimizer on $\spac$,
 as well as on any closed convex subset of $\spac$ that contains at least
 one element where $\J_{\bs V}$ is finite.
 \end{corollary}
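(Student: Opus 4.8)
The plan is to deduce the corollary from the two properties established in Theorem~\ref{th1} by the \emph{direct method of the calculus of variations}: compactness of the sub-level sets together with lower semi-continuity yields the existence of a minimizer, while strict convexity on the finiteness domain yields its uniqueness. Let $K$ denote either $\spac$ itself or a closed convex subset of $\spac$ containing at least one vector of measures where $\J_{\bs V}$ is finite; I would treat both cases simultaneously.

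For existence, first I would check that $\alpha_0 := \inf_{K} \J_{\bs V} < +\infty$. In the case $K = \spac$ this requires exhibiting one finite-energy vector, which I would obtain from Assumption~\ref{def1}(b)--(c): since each $\Delta_i$ has positive capacity and $V_i$ is finite on a set of positive capacity, one can choose for each $i$ a compactly supported measure $\mu_i \in \mathcal{M}_{m_i}(\Delta_i)$ with $I(T_*\mu_i) < +\infty$ and $\int \V_i \, dT_*\mu_i < +\infty$ (restricting to a compact subset of positive capacity of a sub-level set $\{V_i \le M\}$), so that $\J_{\bs V}(\mu_1,\dots,\mu_d) < +\infty$; for a general $K$ this is part of the hypothesis. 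Next I would take a minimizing sequence $(\bs\mu^{(n)})_n$ in $K$ with $\J_{\bs V}(\bs\mu^{(n)}) \to \alpha_0$. For $n$ large these vectors lie in the sub-level set $\{\J_{\bs V} \le \alpha_0 + 1\}$, which is compact by Theorem~\ref{th1}(a); intersecting with the closed set $K$ keeps it compact. Hence some subsequence converges to a limit $\bs\mu^\ast \in K$, and lower semi-continuity gives $\J_{\bs V}(\bs\mu^\ast) \le \liminf_n \J_{\bs V}(\bs\mu^{(n)}) = \alpha_0$. Since $\bs\mu^\ast \in K$ forces $\J_{\bs V}(\bs\mu^\ast) \ge \alpha_0$, the vector $\bs\mu^\ast$ is a minimizer; in particular $\alpha_0 > -\infty$.

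For uniqueness, suppose $\bs\mu^\ast$ and $\bs\nu^\ast$ are two minimizers. Both attain the finite value $\alpha_0$, so they belong to the set where $\J_{\bs V}$ is finite, and their midpoint $\tfrac{1}{2}(\bs\mu^\ast + \bs\nu^\ast)$ again lies in $K$ because $K$ is convex and each $\tfrac{1}{2}(\mu_i^\ast + \nu_i^\ast)$ still has support in $\Delta_i$ and mass $m_i$. If $\bs\mu^\ast \ne \bs\nu^\ast$, strict convexity from Theorem~\ref{th1}(b) would give $\J_{\bs V}\bigl(\tfrac{1}{2}(\bs\mu^\ast + \bs\nu^\ast)\bigr) < \tfrac{1}{2}\J_{\bs V}(\bs\mu^\ast) + \tfrac{1}{2}\J_{\bs V}(\bs\nu^\ast) = \alpha_0$, contradicting the minimality of $\alpha_0$. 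Hence $\bs\mu^\ast = \bs\nu^\ast$.

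The compactness-and-lower-semicontinuity argument is routine and carries no real difficulty — all the genuine work sits inside Theorem~\ref{th1}. The only point deserving care is the finiteness of $\alpha_0$ when $K = \spac$, i.e.\ producing a single competitor of finite energy; this is where the positive-capacity hypotheses (b)--(c) enter, and one must verify that the chosen competitor has finite self-energy \emph{after} push-forward to the sphere and finite integral against the lower semi-continuous field $\V_i$. I would also note explicitly that strict convexity is asserted only on the finiteness domain, which is precisely why it is essential to know that both candidate minimizers attain the finite value $\alpha_0$ before the convexity inequality is applied.
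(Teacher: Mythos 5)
Your proof is correct and is exactly the argument the paper has in mind: the paper states Corollary \ref{cor1} as an ``immediate consequence'' of Theorem \ref{th1} without writing out the details, and your direct-method argument (compact sub-level sets plus lower semi-continuity for existence, strict convexity on the finiteness domain for uniqueness) is the standard way to fill them in. Your extra care about exhibiting a finite-energy competitor on all of $\spac$ via the positive-capacity hypotheses, and about checking finiteness of both minimizers before invoking strict convexity, addresses precisely the points the paper leaves implicit.
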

The case of upper constraints is also covered by Corollary \ref{cor1}. Indeed, given any subset $J\subset\{1,\ldots,d\}$  
and (possibly unbounded) measures $(\sigma_j)_{j\in J}$, the subset of vectors of measures $(\mu_1, \ldots, \mu_d) \in \spac$ satisfying  $\mu_j \leq \sigma_j$
for  $j\in J$ is closed and convex. 

A question of interest is whether the component of such minimizer satisfy the condition \eqref{cond} or not. If the answer is affirmative, then by uniqueness the minimizer coincide with the one of \cite{BKMW}, at least when the  $V_i$'s  satisfy the strong growth condition \eqref{stronggrowthcond}. We relate this question to the regularity of logarithmic potentials, see Remark \ref{regularity}.

\begin{remark}\textbf{(Good rate function)} \\
Note that the condition (a) of Theorem \ref{th1} is what is necessary
to have a \emph{good rate function} in the theory of large deviations \cite{DZ}. More precisely Theorem \ref{th1} yields that 
\begin{equation}
	\label{ratefunction} 
		(\mu_1,\ldots,\mu_d) \mapsto \J_{\bs V}(\mu_1,\ldots,\mu_d)  - \min \J_{\bs V}
		\end{equation}
is a good rate function on $\spac$ as well as on every closed subset of $\spac$.

Whenever the minimizer of an energy functional $\J_{\bs V}$ describes the typical limiting
behavior in an interacting particle system,  it would be interesting to find out
if there is indeed a large deviation principle associated with it. 
Some results in this direction are obtained in \cite{ESS} for Angelesco ensembles, see also \cite{Bl}.
However for the energy functional \eqref{energy2MM} that is relevant for the eigenvalues 
of a random matrix in the two matrix model this remains an open problem.
\end{remark}

The extension of the definition for $\J_{\bs V}$ leads us to consider vector 
equilibrium problems on compact sets in higher dimensional spaces, for which we provide a general treatment 
in the next Section. Theorem \ref{th1} will appear as a consequence of this 
investigation, see Section \ref{comebacktoC}.

\section{Vector equilibrium problems on compacts in $\R^n$}
\label{VEPonRn}
In this section, let $d,n\geq 1$ and $K\subset\R^n$ be a compact set with positive capacity. We now provide a general treatment for vector equilibrium problems involving $d$ measures on $K$. 

We first consider in Section \ref{intro}  vector equilibrium problems involving  measures with unit mass  and no external field, for which we claim lower semi-continuity and strict convexity, see Theorem \ref{th2}. We then show how such result easily extends to vector equilibrium problems with general masses and external fields, see Theorem \ref{th3}. The proof of Theorem \ref{th2}, which is the main part of  Section \ref{VEPonRn}, is given in Section \ref{proof}. Finally, we come back to weakly admissible vector equilibrium problems on $\C$ and provide a proof for Theorem \ref{th1} in Section \ref{comebacktoC}, as a corollary of Theorem \ref{th3}.

\subsection{Introduction}
\label{intro}
For measures on $K$, we again use the definitions 
\eqref{logenergy}--\eqref{mutualenergy} where $| \cdot |$ stands for the Euclidean norm. 
This notation was already used in \eqref{energyonsphere} for measures on the sphere $\s \subset \R^3$.

The following result is a consequence of  \cite[Theorem 2.5]{CKL}.
\begin{proposition} 
\label{posdef}
Let $\mu$ and $\nu$ be measures on  $K$ having finite logarithmic energy 
and  same total mass $\|\mu\|=\|\nu\|$. Then $I(\mu-\nu) \geq 0$ with equality if and only if $\mu=\nu$.
\end{proposition}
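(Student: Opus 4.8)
The plan is to prove the stated energy principle by showing that the logarithmic kernel is conditionally strictly positive definite on $\R^n$, via a Gaussian integral representation combined with Fourier analysis. Set $\lambda = \mu - \nu$. Since $\mu$ and $\nu$ have finite logarithmic energy and the kernel $\log\frac{1}{|x-y|}$ is bounded below on the compact set $K \times K$, the mutual energy $I(\mu,\nu)$ is finite, so $I(\lambda) = I(\mu) - 2I(\mu,\nu) + I(\nu)$ is a well-defined finite number. Moreover $\lambda$ is a compactly supported finite signed measure with total mass $\lambda(\R^n) = \|\mu\| - \|\nu\| = 0$, and since finite-energy measures carry no atoms, $\lambda \times \lambda$ assigns no mass to the diagonal.

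The key identity is the Frullani representation
\[
\log\frac{1}{r} = \frac{1}{2}\int_0^\infty \left(e^{-r^2 t} - e^{-t}\right)\frac{dt}{t}, \qquad r > 0.
\]
First I would integrate this against $d\lambda(x)\,d\lambda(y)$ with $r = |x-y|$ and justify the interchange of integrations. The term $e^{-t}$ is constant in $(x,y)$ and integrates to $e^{-t}\lambda(\R^n)^2 = 0$, which removes the only source of divergence near $t = 0$; the remaining integrand $\iint e^{-t|x-y|^2}\,d\lambda(x)\,d\lambda(y)$ is $O(t)$ as $t\to 0^+$ (its $t$-independent term is $\lambda(\R^n)^2 = 0$) and is controlled as $t\to\infty$ using finiteness of the energy. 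This yields
\[
I(\lambda) = \frac{1}{2}\int_0^\infty \left[\iint e^{-t|x-y|^2}\,d\lambda(x)\,d\lambda(y)\right]\frac{dt}{t}.
\]
Because the Gaussian $z\mapsto e^{-t|z|^2}$ is a positive-definite function on $\R^n$ for each $t>0$ (its Fourier transform $(\pi/t)^{n/2}e^{-|\xi|^2/(4t)}$ is strictly positive), each bracketed integral is nonnegative, and therefore $I(\lambda)\geq 0$.

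For the equality case I would invoke the Parseval identity for the compactly supported signed measure $\lambda$, whose Fourier transform $\widehat{\lambda}$ is a bounded continuous function:
\[
\iint e^{-t|x-y|^2}\,d\lambda(x)\,d\lambda(y) = \left(\frac{\pi}{t}\right)^{n/2}\int_{\R^n} e^{-|\xi|^2/(4t)}\,|\widehat{\lambda}(\xi)|^2\,d\xi \geq 0.
\]
If $I(\lambda) = 0$, then since the integrand in the $t$-representation is nonnegative it must vanish for almost every $t$, hence, by continuity, for every $t > 0$; as the Gaussian weight $e^{-|\xi|^2/(4t)}$ is strictly positive, this forces $\widehat{\lambda}\equiv 0$, and Fourier uniqueness gives $\lambda = 0$, i.e.\ $\mu = \nu$.

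I expect the main obstacle to be the rigorous justification of the Fubini interchange and of the convergence of the $t$-integral at both endpoints, which I would carry out by splitting $\lambda$ into $\mu$ and $\nu$ and applying Tonelli to each of the four finite double integrals, or by regularizing $\lambda$ through convolution with a Gaussian approximate identity. This is precisely where the hypotheses are used: $\|\mu\| = \|\nu\|$ kills the constant term and tames the small-$t$ behavior, while finiteness of both energies controls the large-$t$ behavior and excludes atoms. The argument is entirely dimension-free, which is what makes it applicable to measures on the sphere $\s \subset \R^3$ as required in the reduction of Theorem \ref{th1}.
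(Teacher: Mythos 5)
Your proof is correct, but note that the paper does not actually prove Proposition \ref{posdef}: it states it as a direct consequence of \cite[Theorem 2.5]{CKL}, so your argument replaces a citation by a self-contained proof. What you give is essentially the classical proof of conditional positive definiteness of the logarithmic kernel: the Frullani representation turns $I(\lambda)$, $\lambda=\mu-\nu$, into the superposition $\frac{1}{2}\int_0^\infty \bigl[\iint e^{-t|x-y|^2}\,d\lambda(x)\,d\lambda(y)\bigr]\frac{dt}{t}$ of manifestly nonnegative Gaussian quadratic forms, with the mass-zero hypothesis killing the $e^{-t}$ counterterm, and Fourier uniqueness settles the equality case. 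The Fubini justification you sketch does go through: for each of the four pairs drawn from $\{\mu,\nu\}$, the absolute $t$-integral of $(e^{-t|x-y|^2}-e^{-t})/t$ equals $2\,|\log|x-y||$, which is product-integrable precisely because the energies are finite and the kernel is bounded below on $K\times K$; this is where compactness of $K$ and finiteness of $I(\mu)$, $I(\nu)$ enter. Two cosmetic points: your Parseval identity is missing the normalization constant $(2\pi)^{-n}$ (harmless, since only strict positivity of the Gaussian weight matters), and in the equality case the vanishing of the quadratic form for a \emph{single} $t>0$ already forces $\widehat{\lambda}\equiv 0$, so the almost-every-$t$/continuity step is not needed. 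As for what each route buys: the paper's citation keeps things short and invokes a result that in fact covers unbounded closed sets (more than is needed here), while your proof is elementary and dimension-free --- which, as you correctly emphasize, is exactly what is required when the proposition is applied to measures on the sphere $\s\subset\R^3$ in the proof of Theorem \ref{th1} --- at the cost of a page of Fourier analysis.
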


As a consequence of Proposition \ref{posdef} and of the fact 
that $K$ has finite diameter,  we obtain for any measures $\mu$ and $\nu$ supported in $K$ having finite logarithmic energy that $I(\mu,\nu)$ is finite. Indeed one can assume $\|\mu\|=\|\nu\|=1$ without loss of generality and then we have 
\begin{equation*} \label{Imunulower} 
	I(\mu, \nu) = \iint \log \frac{1}{|x-y|} \, d\mu(x) d\nu(y) \geq  \log \frac{1}{\diam K} > - \infty.
	\end{equation*}
Moreover by Proposition \ref{posdef}
\begin{equation*} \label{Imunuupper}
	\begin{aligned}
	I(\mu, \nu) & = 
	\frac{1}{2}
	\big(I(\mu)+I(\nu)-I(\mu-\nu)\big) \leq \frac{1}{2} \big( I(\mu) +  I(\nu) \big) 	<+\infty.
		\end{aligned}
\end{equation*} 
Given a $d\times d$ symmetric positive definite matrix $C=(c_{ij})$, 
we consider the quadratic map defined for vectors of measures $(\mu_1,\ldots,\mu_d)$ on $K$ by
\begin{equation}
\label{J0}
J_0(\mu_1,\ldots,\mu_d)=  \begin{cases} 
	\displaystyle \sum_{1\leq i, j \leq d}c_{ij} I(\mu_i,\mu_j) & \mbox{if all  $I(\mu_i)<+\infty$,}\\
	+\infty & \mbox{otherwise}. \end{cases}
\end{equation}

The central result of this section is the following.

\begin{theorem}
\label{th2}
For any $d\times d$ symmetric positive definite matrix $C$, the functional $J_0$ defined in \eqref{J0} is lower semi-continuous on $\M_1(K)^d$ and strictly 
convex on the set where it is finite.
\end{theorem}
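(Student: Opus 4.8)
The plan is to establish the two properties separately, using Proposition 2.3 as the key tool for both. Let me think about what each property requires.

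For strict convexity: I want to show that $J_0(\mu_1,\ldots,\mu_d) = \sum_{i,j} c_{ij} I(\mu_i,\mu_j)$ is strictly convex on the set where it is finite. Since $C$ is symmetric positive definite, I would diagonalize it or use the fact that the quadratic form $\sum c_{ij} I(\mu_i,\mu_j)$ can be analyzed via the bilinear structure of the mutual energy. The key observation is that $(\mu,\nu) \mapsto I(\mu,\nu)$ is a bilinear form, and Proposition 2.3 tells us $I(\eta) \geq 0$ for any signed measure $\eta$ of total mass zero with finite energy, with equality iff $\eta = 0$. Given two distinct vectors $\boldsymbol\mu = (\mu_i)$ and $\boldsymbol\nu = (\nu_i)$ in $\mathcal M_1(K)^d$ where $J_0$ is finite, and $t \in (0,1)$, I would compute $J_0(t\boldsymbol\mu + (1-t)\boldsymbol\nu) - tJ_0(\boldsymbol\mu) - (1-t)J_0(\boldsymbol\nu)$. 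By bilinearity this equals $-t(1-t)\sum_{i,j} c_{ij} I(\mu_i-\nu_i,\mu_j-\nu_j)$. Writing $\eta_i = \mu_i - \nu_i$ (each of total mass zero), I need $\sum_{i,j} c_{ij} I(\eta_i,\eta_j) \geq 0$, strictly positive unless all $\eta_i = 0$. Since $C$ is positive definite, I can write $C = A^tA$ (or diagonalize $C = O^t D O$ with $D$ positive); then $\sum_{i,j} c_{ij} I(\eta_i,\eta_j)$ becomes a sum of energies of linear combinations of the $\eta_i$, each nonnegative by Proposition 2.3. Strict positivity follows because positive definiteness forces the combinations to vanish only when all $\eta_i$ vanish; one uses here that the map $\boldsymbol\eta \mapsto \sum c_{ij} I(\eta_i,\eta_j)$ is itself a positive definite quadratic form on the space of zero-mass finite-energy signed measures.

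For lower semi-continuity: I want to show that if $\boldsymbol\mu^{(k)} \to \boldsymbol\mu$ weakly in $\mathcal M_1(K)^d$, then $J_0(\boldsymbol\mu) \leq \liminf_k J_0(\boldsymbol\mu^{(k)})$. The standard approach rests on the principle of descent: for the logarithmic kernel $\log\frac{1}{|x-y|}$, which is lower semi-continuous and bounded below on the compact $K$ (since $\mathrm{diam}\, K < \infty$), the energy $I(\mu)$ and more generally $(\mu,\nu)\mapsto I(\mu,\nu)$ satisfy joint lower semi-continuity under weak convergence. The difficulty is that $J_0$ is an indefinite-looking combination (off-diagonal $c_{ij}$ may be negative), so I cannot simply add up lower semi-continuous pieces. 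The trick is again to use the diagonalizing decomposition: writing the quadratic form in terms of energies of fixed linear combinations of the measures reduces $J_0$ to a sum of genuine (nonnegative) energy functionals, each of which is lower semi-continuous by the principle of descent applied to the weakly convergent combinations. I would first reduce to the case where $\liminf_k J_0(\boldsymbol\mu^{(k)})$ is finite (otherwise nothing to prove), pass to a subsequence realizing the liminf, and then invoke lower semi-continuity of each nonnegative energy term.

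The main obstacle will be making the diagonalization argument clean in the infinite-dimensional measure setting, specifically handling the total-mass constraint and ensuring the change of variables $C = O^t D O$ interacts correctly with the energies. When I write $\sum_{i,j} c_{ij} I(\eta_i,\eta_j) = \sum_\ell \lambda_\ell I\big(\sum_i O_{\ell i}\eta_i\big)$ with $\lambda_\ell > 0$ the eigenvalues of $C$, the signed measures $\sum_i O_{\ell i}\eta_i$ have total mass zero (inherited from the $\eta_i$), so Proposition 2.3 applies and each term is nonnegative; the subtle point for lower semi-continuity is that the measures $\mu_i^{(k)}$ all have the \emph{same} unit mass, so the combinations $\sum_i O_{\ell i}\mu_i^{(k)}$ are signed measures of fixed total mass converging weakly, and I must verify the principle of descent in the form needed for such signed measures — this is where I would lean most heavily on the compactness of $K$ and the boundedness below of the kernel. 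Once these energy-positivity and descent facts are in place, both conclusions of Theorem 2.6 follow directly.
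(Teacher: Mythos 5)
Your strict convexity argument is correct and essentially the paper's own: the bilinearity identity reduces everything to positivity of $\sum_{i,j}c_{ij}I(\mu_i-\nu_i,\mu_j-\nu_j)$ for differences of probability vectors, which follows from a factorization of $C$ (the paper uses the Cholesky factorization $C=B^tB$, you use the spectral decomposition; either works here) together with Proposition \ref{posdef}, since the relevant linear combinations are signed measures of \emph{total mass zero}. The lower semi-continuity half, however, has a genuine gap, located exactly where the paper has to work hardest. Your key step is that each diagonalized term $\bs\mu\mapsto I\big(\sum_i O_{\ell i}\mu_i\big)$ is lower semi-continuous ``by the principle of descent applied to the weakly convergent combinations''. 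But the principle of descent is a statement about \emph{positive} measures: its proof approximates the lower semi-continuous kernel from below by continuous kernels and uses monotone convergence of the integrals against the (positive) product measures. The combinations $\sum_i O_{\ell i}\mu_i$ are signed measures with nonzero total mass (the coefficients $O_{\ell i}$ have mixed signs in general), so no standard descent theorem applies to them; expanding $I\big(\sum_i O_{\ell i}\mu_i^N\big)$ produces mutual energies $I(\mu_i^N,\mu_j^N)$ with \emph{negative} coefficients, for which one would need upper semi-continuity, and that is false in general. Compactness of $K$ and the lower bound on the kernel, which you say you would lean on, do not repair this. Lower semi-continuity of the energy on such signed combinations is in essence the statement to be proved, not an available tool. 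The paper avoids the issue by a different device (Proposition \ref{prooflsc2}): translate all measures a distance $r$ into an extra orthogonal dimension in $\R^{n+1}$; positivity of $J_0(\bs\mu^N-\bs\mu^N*\delta_r)$ (again a zero-mass difference, so Proposition \ref{posdefJ}(a) applies) plus translation invariance give $J_0(\bs\mu^N)\geq\sum_{i,j}c_{ij}I(\mu_i^N*\delta_r,\mu_j^N)$, whose kernel $\log\big(1/\sqrt{|x-y|^2+r^2}\big)$ is \emph{continuous}, so one may let $N\to\infty$ and then $r\to 0$ by dominated convergence.

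A second, independent gap: you never treat limit points $\bs\mu$ where $J_0(\bs\mu)=+\infty$ because some component has $I(\mu_k)=+\infty$; recall that \eqref{J0} \emph{defines} $J_0$ to be $+\infty$ there, and a sequence with all component energies finite can converge to such a point. One must then show $J_0(\bs\mu^N)\to+\infty$, the danger being that the negative cross terms could cancel the blow-up of $I(\mu_k^N)$. Your diagonalized representation cannot even be invoked at such points: with components of infinite energy, $I\big(\sum_i O_{\ell i}\mu_i\big)$ involves differences $\infty-\infty$ (take $\mu_1=\mu_2$ of infinite energy), so lower semi-continuity of the individual terms, even if it were established, does not transfer to $J_0$ there. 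This case is the content of the paper's Proposition \ref{prooflsc1}: the \emph{upper triangular} structure of the Cholesky factor gives $J_0(\bs\mu^N)=\sum_{i=1}^{d-1}I\big(\sum_j b_{ij}\mu_j^N\big)+b_{dd}^2I(\mu_d^N)$, and each of the first $d-1$ terms is bounded below, uniformly in $N$, by $\big(\sum_j b_{ij}\big)^2 I(\omega)$ with $\omega$ the equilibrium measure of $K$, so the isolated term $b_{dd}^2I(\mu_d^N)$ forces the blow-up. Nothing in your proposal plays the role of either of these two arguments, so the lower semi-continuity assertion of the theorem remains unproved.
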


The proof of Theorem \ref{th2} is given in Section \ref{proof}. We first show how Theorem \ref{th2} applies to vector equilibrium problems with external fields on $K$ with the following data.

\begin{assumption}\
\label{dataK}
\begin{enumerate}
\item[(a)] 
$C=(c_{ij})$ is a $d\times d$ real symmetric positive definite matrix.
\item[(b)]
$\bs V=(V_1, \ldots, V_d)^t$ is a vector of external fields  where each 
$V_i : \Delta_i\to  \R\cup\{+\infty\}$ is lower semi-continuous and finite on a set of positive capacity.
\item[(c)]
$\bs m=(m_1,\ldots,m_d)^t$ is a vector of positive numbers. 
\end{enumerate}
\end{assumption}
\noindent
A vector equilibrium problem asks to minimize the following energy functional  
\begin{equation}
\label{JV}
J_{\bs V}(\mu_1,\ldots,\mu_d)=J_0(\mu_1,\ldots,\mu_d)+\sum_{i=1}^d\int V_i(x)d\mu_i(x),
\end{equation}
where $J_0$ is as in \eqref{J0}, over vectors of measures $(\mu_1,\ldots,\mu_d)$ lying in $\M_{m_1}(K)\times\cdots\times \M_{m_d}(K)$ (or in some closed convex subset thereof). A consequence of Theorem \ref{th2} is the following.

\begin{theorem} 
\label{th3}
If   $C$, $\bs V$ and $\bs m$ satisfy Assumption \ref{dataK}, then the functional $J_{\bs V}$ defined in \eqref{JV} is lower semi-continuous on the compact set 
$\M_{m_1}(K)\times\cdots\times\M_{m_d}(K)$ and strictly convex on the set where it is finite. Thus $J_{\bs V}$ admits a unique minimizer on $\M_{m_1}(K)\times\cdots\times\M_{m_d}(K)$,
as well as on every closed convex subset of $\M_{m_1}(K)\times\cdots\times\M_{m_d}(K)$ that contains at least one element where $J_{\bs V}$ is finite.
\end{theorem}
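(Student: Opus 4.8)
The plan is to deduce Theorem~\ref{th3} from Theorem~\ref{th2} in three stages: first reduce arbitrary masses to unit masses by a rescaling, then incorporate the external fields as an affine lower semi-continuous perturbation, and finally extract existence and uniqueness of the minimizer from compactness, lower semi-continuity and strict convexity. Most of the genuine work is carried by Theorem~\ref{th2}, which is assumed, so the present argument is mainly a matter of organizing these reductions.

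First I would remove the masses. Writing $D=\mathrm{diag}(m_1,\ldots,m_d)$ and $\tilde\mu_i=\mu_i/m_i$, the map $\Phi:(\mu_1,\ldots,\mu_d)\mapsto(\tilde\mu_1,\ldots,\tilde\mu_d)$ is an affine homeomorphism from $\M_{m_1}(K)\times\cdots\times\M_{m_d}(K)$ onto $\M_1(K)^d$ in the weak topology. Since $I(\mu_i,\mu_j)=m_im_j\,I(\tilde\mu_i,\tilde\mu_j)$, the functional \eqref{J0} satisfies $J_0(\mu_1,\ldots,\mu_d)=\tilde J_0\big(\Phi(\mu_1,\ldots,\mu_d)\big)$, where $\tilde J_0$ is the functional built from the matrix $\tilde C=DCD$. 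As $D$ is invertible, $\tilde C$ is congruent to $C$, hence again symmetric positive definite, so Theorem~\ref{th2} applies to $\tilde J_0$. Because $\Phi$ is a homeomorphism and affine, lower semi-continuity and strict convexity transfer from $\tilde J_0$ to $J_0$; thus $J_0$ is lower semi-continuous on $\M_{m_1}(K)\times\cdots\times\M_{m_d}(K)$ and strictly convex where it is finite.

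Next I would treat the external-field term $L(\mu_1,\ldots,\mu_d)=\sum_{i=1}^d\int V_i\,d\mu_i$, which is affine, being linear in each $\mu_i$. For lower semi-continuity, each $V_i$ is lower semi-continuous on the compact set $K$, hence attains a finite minimum and is the increasing limit of a sequence of bounded continuous functions; by monotone convergence $\mu_i\mapsto\int V_i\,d\mu_i$ is then a supremum of weakly continuous functionals, hence lower semi-continuous, and $\int V_i\,d\mu_i\geq(\min_K V_i)\,m_i>-\infty$. Since both $J_0$ and $L$ are bounded below on the compact product, their sum $J_{\bs V}=J_0+L$ from \eqref{JV} is well defined with values in $(-\infty,+\infty]$ and lower semi-continuous. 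Adding the affine functional $L$ to the strictly convex $J_0$ preserves strict convexity on the set where $J_{\bs V}$ is finite.

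Finally, existence and uniqueness follow by standard arguments. Each $\M_{m_i}(K)$ is weakly compact because $K$ is compact, so the product is compact and the lower semi-continuous $J_{\bs V}$ attains its infimum there. To make uniqueness meaningful I must ensure this infimum is finite, and here lies the only delicate point: since each $V_i$ is finite on a set of positive capacity, the continuity of logarithmic capacity along increasing unions upgrades this to the existence of a compact $F_i\subset K$ of positive capacity on which $V_i$ is bounded above, and the suitably rescaled equilibrium measure of $F_i$ then has finite logarithmic energy and finite $V_i$-integral; all mutual energies are automatically finite on $K$, as noted before Theorem~\ref{th2}. This produces a vector of measures at which $J_{\bs V}<+\infty$. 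On the resulting finiteness set strict convexity forbids two distinct minimizers, since the midpoint of two would have strictly smaller energy, and the same reasoning applies verbatim to any closed convex subset containing a point of finiteness. The main obstacle is thus not the transfer of lower semi-continuity or convexity, but the passage from ``finite on a set of positive capacity'' to ``bounded on a compact set of positive capacity'' needed to locate a point where $J_{\bs V}$ is finite.
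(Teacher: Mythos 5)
Your proof is correct and follows essentially the same route as the paper: reduce to Theorem \ref{th2} by rescaling the masses (using that the congruent matrix $(c_{ij}m_im_j)_{i,j}$ is again symmetric positive definite) and handle the external fields by writing each lower semi-continuous $V_i$ as an increasing limit of continuous functions, then combine compactness, lower semi-continuity and strict convexity to get existence and uniqueness. The only substantive difference is that you explicitly verify that $J_{\bs V}$ is finite at some point of $\M_{m_1}(K)\times\cdots\times\M_{m_d}(K)$ (upgrading ``finite on a set of positive capacity'' to ``bounded on a compact set of positive capacity''), a point the paper's proof leaves implicit; this is a welcome completion rather than a divergence.
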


\begin{proof}[Proof of Theorem \ref{th3}.] 
Since $V_i$ is lower semi-continuous, there exists an increasing sequence $(V_i^M)_{M}$ of 
continuous functions on $K$ such that $\sup_{M}V^M_i=V_i$. 
By monotone convergence, the map
\[
\mu\mapsto \int V_i(x)d\mu(x) = \sup_{M}\int V^M_i(x)d\mu(x)
\]
is  lower semi-continuous on $\M_{1}(K)$, being the supremum of a family of continuous maps, 
and so is the linear map
\begin{equation} \label{extfieldsmap}
	(\mu_1,\ldots,\mu_d)\mapsto \sum_{i=1}^d\int V_i(x)d\mu_i(x)
\end{equation}
on $\M_{1}(K)^d$. Thus $J_{\bs V}$ is lower semi-continuous on $\M_1(K)^d$ by Theorem \ref{th2}.
Since  \eqref{extfieldsmap} is a linear map in the $\mu_i$'s which is bounded from below, we also find from Theorem \ref{th2} that $J_{\bs V}$ 
is strictly convex on the part of $\M_1(K)^d$ where it is finite, which proves the theorem in case all $m_i = 1$.

For the case of general masses $m_i > 0$, we note that if $\mu_i = m_i \nu_i$ for $i =1, \ldots, d$, then
\begin{equation} \label{generalmasses} J_{\bs V}(\mu_1,\ldots,\mu_d) = \sum_{1\leq i , j \leq d} c_{ij} m_i m_j I(\nu_i, \nu_j)
	+ \sum_{i =1}^d m_i \int V_i(x) \, d\nu_i(x). 
	\end{equation}
	The matrix $(c_{ij} m_i m_j)_{i,j=1}^d$ is symmetric positive definite
which implies by what we just proved that the right-hand side of \eqref{generalmasses}
is lower semi-continuous on $\M_1(K)^d$ and strictly convex on the set where it is finite.
Then the same holds for the left-hand side seen as a functional on $\spac$, and
Theorem \ref{th3} follows.
\end{proof}

In the next subsection we prove Theorem \ref{th2}.

\subsection{Proof of Theorem \ref{th2}}
\label{proof}

For $\bs \mu = (\mu_1,\ldots,\mu_d)\in\M_1(K)^d$, we also write $J_0(\bs \mu) = J_0(\mu_1,\ldots,\mu_d)$ 
 for convenience. 
 
\paragraph{Proof of strict convexity}

Being a positive definite matrix, we note that $C$ admits a Cholesky decomposition
\begin{equation}
\label{cholesky}
	C = B^t B  
\end{equation}
where $B =(b_{ij})$ is upper triangular and $b_{ii}>0$ for $i=1, \ldots, d$. 
The factorization \eqref{cholesky} implies that 
\begin{equation} \label{J0cholesky} 
	J_0(\bs \mu) = \sum_{i=1}^d I \left( \sum_{j=1}^d b_{ij} \mu_j \right)
	\end{equation}
	whenever $\mu_1, \ldots, \mu_d$ have finite logarithmic energy.

We prove the following statement, which is similar to \cite[Proposition 2.8]{BKMW}.
\begin{proposition}  \
\label{posdefJ}
\begin{enumerate}
\item[{\rm (a)}]
Let $\bs\mu=(\mu_1,\ldots,\mu_d)$, $\bs \nu =(\nu_1,\ldots,\nu_d)$  be vectors of probability measures on 
$K$ having  finite logarithmic energy. Then $J_0(\bs\mu -\bs\nu)\geq 0$ with equality if and only if $\bs\mu=\bs\nu$.
\item[{\rm (b)}]
$J_0$ is strictly convex on $ \big\{\bs\mu\in\M_1(K)^d \mid J_0(\bs\mu)<+\infty\big\}$.
\end{enumerate}
\end{proposition}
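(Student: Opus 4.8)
The plan is to deduce both statements from the scalar positive-definiteness already recorded in Proposition \ref{posdef}, using the Cholesky factorization \eqref{cholesky} to diagonalize the quadratic form $J_0$, and then to obtain the strict convexity (b) as a formal consequence of the positivity (a). For part (a), I would first extend $J_0$ bilinearly to the vector of signed measures $\bs\mu - \bs\nu = (\mu_1 - \nu_1, \ldots, \mu_d - \nu_d)$, writing
\begin{equation*}
J_0(\bs\mu - \bs\nu) = \sum_{1\leq i,j\leq d} c_{ij}\, I(\mu_i - \nu_i, \mu_j - \nu_j).
\end{equation*}
Each $\mu_i - \nu_i$ is a signed measure of total mass zero, and since all $\mu_i, \nu_i$ are supported on the compact $K$ and have finite logarithmic energy, every mutual energy on the right is finite by the estimates preceding Theorem \ref{th2}; this makes the bilinear expansion legitimate. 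Substituting $c_{ij} = \sum_k b_{ki}b_{kj}$ and interchanging the finite sums, exactly as in the derivation of \eqref{J0cholesky}, gives
\begin{equation*}
J_0(\bs\mu - \bs\nu) = \sum_{k=1}^d I(\xi_k), \qquad \xi_k = \sum_{i=1}^d b_{ki}(\mu_i - \nu_i),
\end{equation*}
where each $\xi_k$ is a finite-energy signed measure of total mass $\sum_i b_{ki}(\|\mu_i\| - \|\nu_i\|) = 0$.

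Next, to invoke Proposition \ref{posdef}, I would realize each $\xi_k$ as a difference $\alpha_k - \beta_k$ of two positive measures of equal mass and finite energy; this is achieved by grouping the contributions $b_{ki}\mu_i$ and $-b_{ki}\nu_i$ according to the sign of $b_{ki}$, which produces positive measures with $\|\alpha_k\| = \|\beta_k\| = \sum_i |b_{ki}|$. Proposition \ref{posdef} then yields $I(\xi_k) \geq 0$ for each $k$, so that $J_0(\bs\mu - \bs\nu) = \sum_k I(\xi_k) \geq 0$. For the equality case, nonnegativity of the summands forces $I(\xi_k) = 0$, hence $\xi_k = 0$, for every $k$; since $B$ is upper triangular with positive diagonal entries it is invertible, and $\xi_k = 0$ for all $k$ then forces $\mu_i = \nu_i$ for all $i$, i.e. $\bs\mu = \bs\nu$.

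For part (b), I would use the elementary quadratic identity, valid on the convex domain where $J_0$ is finite,
\begin{equation*}
t\, J_0(\bs\mu) + (1-t)\, J_0(\bs\nu) - J_0\big(t\bs\mu + (1-t)\bs\nu\big) = t(1-t)\, J_0(\bs\mu - \bs\nu),
\end{equation*}
which follows from the bilinearity of $(\bs\mu, \bs\nu) \mapsto \sum_{i,j} c_{ij} I(\mu_i, \mu_j)$. One first notes that this domain is convex: if all components of $\bs\mu$ and $\bs\nu$ have finite energy, then so do those of $t\bs\mu + (1-t)\bs\nu$, so the identity is meaningful and $t\bs\mu + (1-t)\bs\nu \in \M_1(K)^d$. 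By part (a) the right-hand side is strictly positive whenever $\bs\mu \neq \bs\nu$ and $t \in (0,1)$, which is precisely strict convexity.

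The main obstacle is not conceptual but one of bookkeeping: one must check throughout that every energy that appears is finite, so that the formal bilinear manipulations are justified. This is guaranteed by working on the compact $K$ — where $\log\frac{1}{|x-y|}$ is bounded below by $\log\frac{1}{\diam K}$ — together with the finite-energy hypothesis, which jointly ensure all mutual energies are finite, as already exploited just before the statement of Theorem \ref{th2}. Once finiteness is secured, the Cholesky reduction transfers the scalar result of Proposition \ref{posdef} to the vector setting with no further analytic input.
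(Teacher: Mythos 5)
Your proposal is correct and follows essentially the same route as the paper: Cholesky factorization $C = B^tB$ to write $J_0(\bs\mu-\bs\nu)$ as a sum of scalar energies $I\big(\sum_j b_{ij}(\mu_j-\nu_j)\big)$ of mass-zero signed measures, Proposition \ref{posdef} for nonnegativity and the equality case (with invertibility of $B$), and the quadratic identity $J_0\big(t\bs\mu+(1-t)\bs\nu\big) = tJ_0(\bs\mu)+(1-t)J_0(\bs\nu)-t(1-t)J_0(\bs\mu-\bs\nu)$ for strict convexity. Your explicit splitting of each $\xi_k$ into a difference $\alpha_k-\beta_k$ of positive finite-energy measures of equal mass, grouped by the sign of $b_{ki}$, is a slightly more careful justification of the step the paper performs implicitly when invoking Proposition \ref{posdef} for signed measures of total mass zero.
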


\begin{proof} 
(a) The Cholesky decomposition $C=B^tB$ yields (similar to \eqref{J0cholesky}) 
\begin{equation} \label{J0posdef}
	J_0(\bs\mu -\bs\nu) = \sum_{i=1}^d I\Big(\sum_{j=1}^d b_{ij}(\mu_j-\nu_j)\Big) 
\end{equation}
and, since for any $1\leq i\leq d$ the signed measure $\sum_{j=1}^db_{ij}(\mu_j-\nu_j)$ has total mass zero, each term in the right-hand side of \eqref{J0posdef} is non-negative by Proposition \ref{posdef}.
Thus $J_0(\bs\mu -\bs\nu) \geq 0$. Equality holds if and only if $\sum_{j=1}^d b_{ij}(\mu_j-\nu_j) = 0$
for every $i = 1, \ldots, d$, and this means that $\bs\mu=\bs\nu$ since $B$ is invertible.

(b) Let $\boldsymbol\mu, \boldsymbol\nu \in\M_1(K)^d$ 
satisfy  $J_0(\bs \mu), J_0(\bs \nu)<+\infty$. Then each component has finite logarithmic energy,
and we obtain  by bilinearity of $(\mu,\nu)\mapsto I(\mu,\nu)$ that
\[
	J_0\big(t \bs\mu + (1-t)\bs\nu \big) = 
		t J_0(\bs\mu)+(1- t) J_0(\bs\nu) - t(1-t) J_0(\bs\mu-\bs\nu)
\] 
for every $0 < t < 1$. Then part (b) follows from part (a).
\end{proof}

\paragraph{Proof of lower semi-continuity}

The next proposition is the main step in establishing lower semi-continuity of $J_0$
at the points where it is infinite.  The proof is inspired from the one of \cite[Proposition 2.11]{BKMW}.

\begin{proposition}
\label{prooflsc1}
Let $(\bs\mu^N)_N=\big((\mu^N_1,\ldots,\mu_d^N)\big)_N$ be a sequence in $\M_1(K)^d$ satisfying $J_0(\bs\mu^N)<+\infty$ for all $N$. Assume there exists $k \in\{1,\ldots,d\}$ such that 
\[ \lim_{N \to \infty}  I(\mu_{k}^N)=+\infty. \]
Then 
\[ \lim_{N \to \infty} J_0(\bs\mu^N) =+\infty. \]
\end{proposition}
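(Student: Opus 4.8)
The plan is to rule out the only danger, namely that the blow-up of the diagonal contribution $c_{kk}I(\mu_k^N)$ (with $c_{kk}>0$) gets cancelled by off-diagonal terms $c_{ij}I(\mu_i^N,\mu_j^N)$ with $c_{ij}<0$ drifting to $-\infty$. The positive definiteness of $C$ is exactly what forbids such a net cancellation, and the cleanest way to bring it to bear is to pass to the subspace of mean-zero measures, on which $(\sigma,\tau)\mapsto I(\sigma,\tau)$ is a genuine positive definite inner product by Proposition \ref{posdef}.

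Concretely, since $K$ is compact with positive capacity I would fix $\omega$ to be the equilibrium measure of $K$: it lies in $\M_1(K)$, has $I(\omega)<+\infty$, and its potential $U^\omega$ is bounded on $K$ (bounded above by the Robin constant everywhere, via Frostman's theorem and Maria's maximum principle, and below by $\log(1/\diam K)$ on $K$). For each $N$ and each $i$ put $\sigma_i^N=\mu_i^N-\omega$, a signed measure of total mass zero. Because $J_0(\bs\mu^N)<+\infty$ forces every $I(\mu_i^N)$ to be finite, each $\sigma_i^N$ has finite logarithmic energy, and $I(\sigma_i^N)\ge 0$ by Proposition \ref{posdef}.

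Writing $\mu_i^N=\omega+\sigma_i^N$ and expanding by bilinearity, $J_0(\bs\mu^N)$ splits as
\[ J_0(\bs\mu^N)=\Big(\sum_{i,j}c_{ij}\Big)I(\omega)+2\sum_{i,j}c_{ij}I(\omega,\sigma_j^N)+\sum_{i,j}c_{ij}I(\sigma_i^N,\sigma_j^N). \]
The first term is a fixed constant. The middle (linear) term is bounded uniformly in $N$: indeed $I(\omega,\sigma_j^N)=\int U^\omega\,d\mu_j^N-I(\omega)$, and since $U^\omega$ is bounded on $K$ while $\mu_j^N\in\M_1(K)$, each $I(\omega,\sigma_j^N)$ stays in a fixed bounded interval. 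For the quadratic term I would invoke the elementary inequality that, for a symmetric positive definite $C$ and any vectors $\sigma_1,\dots,\sigma_d$ of an inner product space,
\[ \sum_{i,j}c_{ij}\langle\sigma_i,\sigma_j\rangle\ \ge\ \lambda_{\min}(C)\sum_{i=1}^d\langle\sigma_i,\sigma_i\rangle, \]
which follows from $C-\lambda_{\min}(C)I=R^tR\succeq0$; applied with the inner product $I(\cdot,\cdot)$ on mean-zero measures this yields $\sum_{i,j}c_{ij}I(\sigma_i^N,\sigma_j^N)\ge\lambda_{\min}(C)\sum_i I(\sigma_i^N)$ with $\lambda_{\min}(C)>0$.

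To conclude, all $I(\sigma_i^N)\ge0$ give $\sum_i I(\sigma_i^N)\ge I(\sigma_k^N)$, and $I(\sigma_k^N)=I(\mu_k^N)-2\int U^\omega\,d\mu_k^N+I(\omega)=I(\mu_k^N)+O(1)\to+\infty$ by hypothesis (again using that $U^\omega$ is bounded on $K$). Hence $J_0(\bs\mu^N)\ge\lambda_{\min}(C)\,I(\sigma_k^N)-(\text{bounded in }N)\to+\infty$. The main obstacle is precisely the potential cancellation of negative off-diagonal terms; it is overcome by discarding the single ``constant direction'' on which $I$ fails to be positive definite --- at the harmless cost of the bounded linear term produced by the shift by $\omega$ --- and then converting the positive definiteness of $C$ into a coercive lower bound through the smallest-eigenvalue inequality.
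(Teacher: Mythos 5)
Your strategy is sound and, in mechanism, genuinely different from the paper's. The paper takes a Cholesky factorization $C=B^tB$ with $B$ upper triangular (so that, after reordering so $k=d$, the term $b_{dd}^2\,I(\mu_d^N)$ sits isolated in $J_0(\bs\mu^N)=\sum_i I\bigl(\sum_j b_{ij}\mu_j^N\bigr)$) and bounds each remaining term below by comparing $\sum_j b_{ij}\mu_j^N$ with $\bigl(\sum_j b_{ij}\bigr)\omega$, using that $U^{\omega}$ is \emph{constant} quasi-everywhere on $K$. You instead recenter every component at $\omega$ and convert positive definiteness of $C$ into coercivity through $C-\lambda_{\min}(C)I\succeq 0$. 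Your algebra is correct: the expansion of $J_0$, the inequality $\sum_{i,j}c_{ij}I(\sigma_i^N,\sigma_j^N)\ge\lambda_{\min}(C)\sum_i I(\sigma_i^N)$ (legitimate, since by Proposition \ref{posdef} and polarization $I(\cdot,\cdot)$ is a positive semi-definite symmetric bilinear form on mass-zero, finite-energy signed measures supported in $K$), and the identity $I(\sigma_k^N)=I(\mu_k^N)-2\int U^{\omega}\,d\mu_k^N+I(\omega)$ all hold. Your route also treats all indices symmetrically and needs only \emph{boundedness} of $U^{\omega}$ on $K$, not constancy.

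That boundedness is the one weak point. You justify it by ``Frostman's theorem and Maria's maximum principle,'' which are theorems of \emph{planar} potential theory, whereas the proposition concerns compact $K\subset\R^n$ with the logarithmic kernel --- and $n=3$ is exactly the case the paper needs, since it applies the result to $K=\s\subset\R^3$. For $n\ge 3$ logarithmic potentials are not harmonic off the support, and the maximum principle genuinely fails: the equilibrium measure of the unit sphere $S^2\subset\R^3$ is the uniform measure $\omega$, for which $U^{\omega}\equiv\tfrac12-\log 2=\rho$ on the sphere, yet $U^{\omega}(0)=0>\rho$ at the center. So the bound $U^{\omega}\le\rho$ on $K$ is not available in general. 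Fortunately, the fact you actually use survives by an elementary estimate that avoids any maximum principle: first, $U^{\omega}\le\rho$ on $\supp(\omega)$ (this follows from $U^{\omega}=\rho$ $\omega$-a.e.\ and lower semi-continuity, a dimension-free argument); second, for arbitrary $x$, taking $y_0\in\supp(\omega)$ nearest to $x$ one checks $|x-y|\ge\tfrac12|y_0-y|$ for every $y\in\supp(\omega)$, whence $U^{\omega}(x)\le\log 2+U^{\omega}(y_0)\le\rho+\log 2$. With this substitution (the lower bound $\log(1/\diam K)$ on $K$ is fine as you state it), your proof is complete. For what it is worth, the paper's own appeal to \cite{ST} for the quasi-everywhere constancy of $U^{\omega}$ on $K$ raises the same dimensional issue, and your argument, needing only boundedness, is the easier one to repair.
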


\begin{proof}
We may assume $k=d$ without loss of generality. By \eqref{J0cholesky} and the fact that
$B$ is upper triangular, we have for every $N$,
\begin{equation}
\label{A1}
	J_0(\bs\mu^N) = 
	\sum_{i=1}^{d-1}I\Big(\sum_{j=1}^db_{ij}\mu^N_j\Big)+ b_{dd}^2 I(\mu^N_d).
\end{equation}

Note that the map $\mu\mapsto I(\mu)$ is lower semi-continuous on $\M_1(K)$.
For compact $K \subset \mathbb R^2 \simeq \C$ this is proved in \cite[Chapter 5, Theorem 2.1]{NS} for example,
but the proof applies without any modification to higher dimensions.
Thus by Proposition \ref{posdefJ} (b) it  has a unique minimizer $\omega$ on $\M_1(K)$. 
One can moreover show that this minimizer has constant logarithmic potential $U^{\omega}$  on $K$ 
up to a set $E$ of zero capacity (see \cite[Theorem I.1.3 and Remark I.1.6]{ST}), 
and that $\mu(E)=0$ for any measure $\mu$ on $K$ having finite logarithmic energy (see \cite[Remark I.1.7]{ST}). 

Then, Proposition \ref{posdef} yields for $i = 1, \ldots, d$, 
\[
I\Big(\sum_{j=1}^db_{ij}\mu^N_j -\big(\sum_{j=1}^db_{ij}\big)\omega \Big)\geq 0,
\] 
which implies for $i=1, \ldots, d-1$ that
\begin{align}
\label{A2}
I\Big(\sum_{j=1}^db_{ij}\mu^N_j\Big) 
&  \geq  2\big(\sum_{j=1}^db_{ij}\big) I\Big(\sum_{j=1}^db_{ij}\mu^N_j,\omega\Big) - \big(\sum_{j=1}^db_{ij}\big)^2 I(\omega).
\end{align}
Since $U^{\omega} = \rho$ is constant on $K\setminus E$, it easily follows that 
$I(\omega) = \rho$ and   
\[   I\Big(\sum_{j=1}^db_{ij}\mu^N_j,\omega\Big) = \sum_{j=1}^d b_{ij} \int U^{\omega}(x) \, d\mu_j^N(x) = 
	     \Big( \sum_{j=1}^d b_{ij}  \Big) I(\omega), \]
where the last equality holds since $U^{\omega} = I(\omega)$ on $K \setminus E$ and $\mu_j^N(E) = 0$ for every $j=1, \ldots, d$.
Using this in \eqref{A2} we find
\begin{align}
\label{A3}
I\Big(\sum_{j=1}^db_{ij}\mu^N_j\Big)  \geq  \Big(\sum_{j=1}^db_{ij}\Big)^2 I(\omega).
\end{align}
Summing \eqref{A3} over $i=1,\ldots, d-1$ and using \eqref{A1}, we find that 
\[ J_0(\bs\mu^N) \geq  \sum_{i=1}^{d-1} \Big(\sum_{j=1}^d b_{ij} \Big)^2 I(\omega) + b_{dd}^2 I(\mu_d^N). \] 
Thus if $I(\mu_d^N) \to +\infty$ as $N \to \infty$ we also have $J_0(\bs \mu^N) \to +\infty$
which completes the proof of Proposition \ref{prooflsc1}.
\end{proof}

The next proposition deals with lower semi-continuity of $J_0$ at
points where it is finite. We follow the lines of the proof of \cite[Proposition 2.9]{BKMW} and simplify it by considering a different way to approximate measures.

\begin{proposition}
\label{prooflsc2}
Let $(\bs\mu^N)_N=\big((\mu^N_1,\ldots,\mu_d^N)\big)_N$ be a sequence in $\M_1(K)^d$ satisfying $J_0(\bs\mu^N)<+\infty$ for all $N$. Assume $(\bs\mu^N)_N$ converges towards a limit  $\bs\mu=(\mu_1,\ldots,\mu_d)$ with $J_0(\bs\mu)<+\infty$.
 Then 
\begin{equation}
\label{B0}
\liminf_{N\to \infty} J_0(\bs\mu^N)\geq J_0(\bs\mu).
\end{equation}
\end{proposition}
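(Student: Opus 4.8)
The plan is to reduce everything, via the Cholesky factorization $C=B^tB$ from \eqref{cholesky}, to the energies of the signed measures $\lambda_i^N=\sum_{j=1}^d b_{ij}\mu_j^N$ and $\lambda_i=\sum_{j=1}^d b_{ij}\mu_j$, so that by \eqref{J0cholesky} one has $J_0(\bs\mu^N)=\sum_i I(\lambda_i^N)$ and $J_0(\bs\mu)=\sum_i I(\lambda_i)$. There is nothing to prove unless $\liminf_N J_0(\bs\mu^N)<+\infty$, so I first pass to a subsequence along which $J_0(\bs\mu^N)$ converges to this finite liminf and is in particular bounded. By Proposition \ref{prooflsc1} in contrapositive form, this boundedness forces each $I(\mu_j^N)$, and hence each $I(\lambda_i^N)$, to stay bounded in $N$.

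The key algebraic step is to set $\delta_i^N:=\lambda_i^N-\lambda_i=\sum_j b_{ij}(\mu_j^N-\mu_j)$. Since $\mu_j^N$ and $\mu_j$ are probability measures, each $\mu_j^N-\mu_j$ has total mass zero, hence so does $\delta_i^N$; moreover $\delta_i^N\to 0$ weakly and has finite logarithmic energy. Expanding by bilinearity, $I(\lambda_i^N)=I(\lambda_i)+2\,I(\lambda_i,\delta_i^N)+I(\delta_i^N)$, and because $\delta_i^N$ has total mass zero and finite energy, Proposition \ref{posdef} gives $I(\delta_i^N)\geq 0$. Summing over $i$,
\[ J_0(\bs\mu^N)\geq J_0(\bs\mu)+2\sum_{i=1}^d I(\lambda_i,\delta_i^N), \]
so the whole statement reduces to showing that $I(\lambda_i,\delta_i^N)\to 0$ as $N\to\infty$ for each $i$.

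This cross term is the main obstacle: writing $I(\lambda_i,\delta_i^N)=\int U^{\lambda_i}\,d\delta_i^N$, the potential $U^{\lambda_i}$ of the signed measure $\lambda_i$ is only a difference of lower semi-continuous functions, so weak convergence $\delta_i^N\to 0$ does not by itself give $\int U^{\lambda_i}\,d\delta_i^N\to 0$. To overcome this I regularize $\lambda_i$ (this is the promised different way of approximating measures): convolving with the uniform probability measure $\theta_\epsilon$ on a sphere of radius $\epsilon$ in $\R^n$, I set $(\lambda_i)_\epsilon=\lambda_i*\theta_\epsilon$. Because $x\mapsto\log\frac{1}{|x|}$ is superharmonic on $\R^n$, the measure $(\lambda_i)_\epsilon$ has a continuous potential, it differs from $\lambda_i$ by a measure of total mass zero, and $I(\lambda_i-(\lambda_i)_\epsilon)\to 0$ as $\epsilon\to 0$. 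I then split
\[ I(\lambda_i,\delta_i^N)=I\big((\lambda_i)_\epsilon,\delta_i^N\big)+I\big(\lambda_i-(\lambda_i)_\epsilon,\delta_i^N\big). \]
For fixed $\epsilon$ the first term equals $\int U^{(\lambda_i)_\epsilon}\,d\delta_i^N$ with $U^{(\lambda_i)_\epsilon}$ continuous, hence it tends to $0$ as $N\to\infty$ since $\delta_i^N\to 0$ weakly.

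For the second term both arguments are signed measures of total mass zero and finite energy, so the energy form is a genuine inner product there (Proposition \ref{posdef}) and the Cauchy--Schwarz inequality gives
\[ \big|I\big(\lambda_i-(\lambda_i)_\epsilon,\delta_i^N\big)\big|\leq \sqrt{I(\lambda_i-(\lambda_i)_\epsilon)}\;\sqrt{I(\delta_i^N)}. \]
The factor $\sqrt{I(\delta_i^N)}$ is bounded uniformly in $N$ by the triangle inequality for the energy seminorm on mass-zero measures together with the boundedness of the $I(\mu_j^N)$ noted above, while $\sqrt{I(\lambda_i-(\lambda_i)_\epsilon)}\to 0$ as $\epsilon\to 0$. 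Letting first $N\to\infty$ and then $\epsilon\to 0$ yields $I(\lambda_i,\delta_i^N)\to 0$, which establishes \eqref{B0} along the chosen subsequence and hence in general. The delicate points to verify carefully are the continuity of $U^{(\lambda_i)_\epsilon}$ and the energy convergence $I(\lambda_i-(\lambda_i)_\epsilon)\to 0$ of the regularization, together with the uniform energy bound on $\delta_i^N$.
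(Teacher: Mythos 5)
Your proposal is correct in substance, but it follows a genuinely different route from the paper's, so a comparison is in order. The paper never mollifies the limit measure: it regularizes the sequence itself, replacing $\bs\mu^N$ by its translate $\bs\mu^N * \delta_r$ a distance $r$ into an extra coordinate direction of $\R^{n+1}$. Positive definiteness (Proposition \ref{posdefJ}(a)) applied to $\bs\mu^N - \bs\mu^N*\delta_r$, together with translation invariance of the logarithmic kernel, gives $J_0(\bs\mu^N) \ge \sum_{i,j} c_{ij}\, I(\mu_i^N*\delta_r,\mu_j^N)$, whose kernel $\log\bigl(1/\sqrt{|x-y|^2+r^2}\bigr)$ is continuous on $K\times K$; weak convergence then passes to the limit for fixed $r$, and the limit $r\to0$ is handled by dominated convergence, using only that the components of $\bs\mu$ have finite energy. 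That argument needs no uniform energy bounds, no Cauchy--Schwarz, no appeal to Proposition \ref{prooflsc1}, and it works verbatim for every $n\ge 1$. Your argument instead smooths the limit measure and compensates with Hilbert-space machinery: the Cholesky reduction \eqref{J0cholesky}, Cauchy--Schwarz on mass-zero finite-energy measures (legitimate, by Proposition \ref{posdef}), and a uniform bound on $I(\delta_i^N)$. What your route buys is a slightly stronger conclusion: along the subsequence you actually prove $I(\lambda_i,\lambda_i^N-\lambda_i)\to 0$, i.e.\ convergence against the limit in the energy inner product, from which \eqref{B0} follows; the paper's route is shorter and more self-contained.

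Three points need repair or completion before your sketch is a full proof. First, the superharmonicity of $\log(1/|x|)$ on $\R^n$, which you invoke for the mollification, is \emph{false} for $n=1$: there $-\log|x|$ is convex away from the origin, and $\theta_\epsilon$ consists of two atoms whose potential is not continuous. Since the statement allows $n\ge 1$, you must first embed $K$ into $\R^{2}$ (the energy is unchanged) --- ironically, the same dimension-raising device that drives the paper's proof. Second, your appeal to Proposition \ref{prooflsc1} ``in contrapositive form'' needs a sub-subsequence argument: boundedness of $J_0(\bs\mu^N)$ along your subsequence only rules out $I(\mu_k^N)\to+\infty$ along it; to get boundedness of $I(\mu_k^N)$, suppose it were unbounded and pass to a further subsequence along which $I(\mu_k^N)\to+\infty$, contradicting Proposition \ref{prooflsc1}. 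From this, $I(\delta_i^N)$ is indeed bounded, via $\sqrt{I(\delta_i^N)}\le\sum_j |b_{ij}|\sqrt{I(\mu_j^N-\mu_j)}$ and $I(\mu_j^N-\mu_j)\le I(\mu_j^N)+I(\mu_j)-2\log\frac{1}{\diam K}$. Third, the two lemmas you flag do hold for $n\ge 2$, but they are the real content of your approach and require proofs; here is how they go. The kernel $k_\epsilon:=\log(1/|\cdot|)*\theta_\epsilon$ is continuous because the logarithmic singularity is integrable over the $(n-1)$-sphere, hence $U^{(\lambda_i)_\epsilon}(x)=\int k_\epsilon(x-y)\,d\lambda_i(y)$ is continuous by uniform continuity of $k_\epsilon$ on compacts. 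By superharmonicity, the kernels $k_\epsilon$ and $k_\epsilon*\theta_\epsilon$ lie below $\log(1/|\cdot|)$ and increase pointwise to it as $\epsilon\downarrow 0$, so monotone convergence applied to the four terms coming from the decomposition of $\lambda_i$ into positive and negative parts gives $I(\lambda_i,\lambda_i*\theta_\epsilon)\to I(\lambda_i)$ and $I(\lambda_i*\theta_\epsilon)\to I(\lambda_i)$, whence
\begin{equation*}
I\bigl(\lambda_i-(\lambda_i)_\epsilon\bigr)=I(\lambda_i)-2I(\lambda_i,\lambda_i*\theta_\epsilon)+I(\lambda_i*\theta_\epsilon)\longrightarrow 0 .
\end{equation*}
With these completions, your argument is a valid alternative proof of the proposition.
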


\begin{proof}
We embed $\R^n$ into $\R^{n+1}$ in the obvious way, namely if $(e_1, e_2, \ldots, e_{n+1})$ 
is the standard orthonormal basis of $\R^{n+1}$ then we identify $\R^n$ with the linear span of $e_1, \ldots, e_n$.
In this way we also consider $K \subset \R^n$ as a subset of $\R^{n+1}$.

For $r>0$, let $\delta_r$ be the Dirac measure at the point $r e_{n+1} = (0, 0, \ldots, 0, r) \in \R^{n+1}$. 
For a measure $\mu$ on $\R^n$ we then have that  the convolution  $\mu*\delta_r$  yields 
a measure on $\R^{n+1}$ which is the translation  of $\mu$ along $r e_{n+1}$. 
Then for each $N$, the quantity $J_0(\bs\mu^N-\bs\mu^N*\delta_r)$, where the convolution is taken componentwise,  
makes sense and is non-negative by Proposition \ref{posdefJ} (a).  As a consequence we have
\begin{equation}
\label{B1}
J_0(\bs\mu^N)+J_0(\bs\mu^N*\delta_r)\geq 2\sum_{1 \leq i,j \leq d} c_{ij}I\big(\mu_i^N*\delta_r, \mu^N_j \big).
\end{equation}
Since the convolution with $\delta_r$ is just a translation and the logarithmic kernel $\log \frac{1}{|x-y|}$
is translation invariant, the two terms on the left-hand side of \eqref{B1} are the same.
We thus obtain from \eqref{B1}
\begin{equation}
\label{B2}
J_0(\bs\mu^N) \geq  \sum_{1 \leq i,j \leq d} c_{ij}I\big(\mu_i^N*\delta_r, \mu^N_j\big).
\end{equation}

Next, we compute by using orthogonality between elements of $\R^n$ and $e_{n+1}$  that
\begin{align*}
I\big(\mu_i^N*\delta_r,\mu^N_j\big) &  = \iint \log\frac{1}{|x-y|}d\big(\mu^N_i*\delta_r \big)(x)d\mu^N_j(y)\\
 & = \iint \log\frac{1}{|x-y+ r e_{n+1}|}\,d\mu^N_i(x)d\mu^N_j(y)\\
 & = \iint \log\frac{1}{\sqrt{|x-y|^2+r^2}}\,d\mu^N_i(x)d\mu^N_j(y).
\end{align*}
Since for fixed $r>0$ the map $(x,y)\mapsto \log (1/ \sqrt{|x-y|^2+r^2})$ is continuous on 
$K \times K$ and $(\bs \mu^N)_N$ converges towards $\bs \mu$, we obtain 
\[ \lim_{N \to \infty} I \big( \mu_i^N * \delta_r, \mu^N_j \big) =
	 \iint \log \frac{1}{\sqrt{|x-y|^2 + r^2}} \, d\mu_i(x) d\mu_j(y), \]
for every $i,j = 1, \ldots, d$, so that by \eqref{B2},
\begin{equation}
\label{B3}
\liminf_{N\to \infty}J_0(\bs\mu^N)\geq \sum_{1 \leq i,j \leq d} c_{ij}
	 \iint \log \frac{1}{\sqrt{|x-y|^2 + r^2}} \, d\mu_i(x) d\mu_j(y).
\end{equation}
The inequality \eqref{B3} holds for every $r > 0$. For every $x,y\in K$ and $0 < r \leq 1$, we have the inequalities 
\[
	\frac{1}{2} \log \frac{1}{(\diam K)^2 + 1} \leq \log\frac{1}{\sqrt{|x-y|^2+r^2}}\leq \log\frac{1}{|x-y|}.
\] 
Thus, since the $\mu_i$'s have finite logarithmic energy by assumption, we obtain by dominated convergence
\begin{equation}
\label{B4} 
	\lim_{r\to \,0}\iint \log \frac{1}{\sqrt{|x-y|^2 + r^2}} \, d\mu_i(x) d\mu_j(y)
	=I(\mu_i,\mu_j) .
	\end{equation}
Letting $r \to 0$ in \eqref{B3} and using \eqref{B4} we obtain \eqref{B0}.
\end{proof}

\begin{proposition} 
\label{LSC}
$J_0$ is lower semi-continuous on $\M_1(K)^d$.
\end{proposition}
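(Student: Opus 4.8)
The plan is to prove Proposition \ref{LSC} by combining the two preceding propositions, which together cover the two distinct ways that lower semi-continuity can fail. Recall that $J_0$ is lower semi-continuous on $\M_1(K)^d$ precisely when, for every convergent sequence $\bs\mu^N \to \bs\mu$ in $\M_1(K)^d$, we have $\liminf_{N\to\infty} J_0(\bs\mu^N) \geq J_0(\bs\mu)$. So I would fix such a sequence and its limit, and argue the inequality.

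First I would dispose of the trivial cases. If $J_0(\bs\mu^N) = +\infty$ for infinitely many $N$ along a subsequence realizing the $\liminf$, then the $\liminf$ is $+\infty$ and the inequality is immediate; so I may pass to a subsequence and assume $J_0(\bs\mu^N) < +\infty$ for every $N$. Moreover, if $\liminf_{N\to\infty} J_0(\bs\mu^N) = +\infty$ there is again nothing to prove, so I may pass to a further subsequence so that $\lim_{N\to\infty} J_0(\bs\mu^N)$ exists and is finite.

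Next comes the dichotomy on the limit point $\bs\mu$. Either $J_0(\bs\mu) = +\infty$ or $J_0(\bs\mu) < +\infty$. I claim the first case cannot occur under the assumption that the limit of $J_0(\bs\mu^N)$ is finite. Indeed, by the Cholesky identity \eqref{J0cholesky} each term $b_{dd}^2 I(\mu_d^N)$ is bounded (since all other terms on the right-hand side of \eqref{A1} are bounded below, as in the derivation of \eqref{A3}, so each individual energy $I(\mu_i^N)$ stays bounded); more directly, Proposition \ref{prooflsc1} shows that if $\limsup_N I(\mu_k^N) = +\infty$ for some $k$ then $\lim_N J_0(\bs\mu^N) = +\infty$, contradicting finiteness. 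Hence all the energies $I(\mu_k^N)$ remain bounded along the subsequence, and by lower semi-continuity of $\mu \mapsto I(\mu)$ on $\M_1(K)$ the limit measures satisfy $I(\mu_k) < +\infty$ for every $k$, whence $J_0(\bs\mu) < +\infty$. Therefore only the second case survives, and Proposition \ref{prooflsc2} applies directly to give $\liminf_{N\to\infty} J_0(\bs\mu^N) \geq J_0(\bs\mu)$.

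The only delicate point is the bookkeeping in the contrapositive argument linking Proposition \ref{prooflsc1} to the finiteness of $J_0(\bs\mu)$: one must be careful to extract a single subsequence along which \emph{every} $I(\mu_k^N)$ is bounded, rather than treating the indices $k$ separately, and then invoke lower semi-continuity of the scalar energy componentwise. This is routine once one observes that boundedness of $\lim_N J_0(\bs\mu^N)$ forces boundedness of each $I(\mu_k^N)$ via Proposition \ref{prooflsc1}. I do not expect a genuine obstacle here, since the structural work has already been done in the two propositions; Proposition \ref{LSC} is essentially their logical assembly.
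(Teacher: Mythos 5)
Your proof is correct and is essentially the paper's own argument: both assemble Propositions \ref{prooflsc1} and \ref{prooflsc2} by means of the lower semi-continuity of the scalar energy $\mu\mapsto I(\mu)$ on $\M_1(K)$. The only difference is organizational: the paper handles the case $J_0(\bs\mu)=+\infty$ directly (then $I(\mu_k)=+\infty$ for some $k$, and lower semi-continuity of $I$ forces $\lim_N I(\mu_k^N)=+\infty$, so Proposition \ref{prooflsc1} applies to the full sequence with no subsequence extraction), whereas you run the same implication contrapositively, which is why your version needs the extra subsequence bookkeeping.
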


\begin{proof}
Suppose $(\bs \mu^N)_N$ is a sequence in $\M_1(K)^d$ that converges
to $\bs \mu $. In order to prove that
\begin{equation}
\label{B5}
\liminf_{N\to\infty} J_0(\bs\mu^N)\geq J_0(\bs\mu)
\end{equation}
we may assume that $J_0(\bs \mu^N) < +\infty$ for
every $N$. If $J_0(\bs \mu) < +\infty$, then \eqref{B5} follows
from Proposition \ref{prooflsc2}. If $J_0(\bs \mu) = +\infty$
then by the definition \eqref{J0} we must have $I(\mu_k) = +\infty$
for at least one $k \in \{1, \ldots, d\}$. By lower semi-continuity of $\mu\mapsto I(\mu)$ on $M_1(K)$ it then follows that
\[ \lim_{N \to \infty} I(\mu_k^N) = +\infty, \]
and  \eqref{B5} follows from Proposition \ref{prooflsc1}.
\end{proof}

The proof of Theorem \ref{th2} is therefore complete. 

\subsection{Proof of Theorem \ref{th1}}
\label{comebacktoC}

Equipped with Theorem \ref{th3}, it is now easy to provide a proof for Theorem \ref{th1} as announced in Section \ref{extension}.

\begin{proof}[Proof of Theorem \ref{th1}.]
Given $C$, $\bs\Delta$, $\bs V$ and $\bs m$ satisfying Assumption \ref{def1}, introduce the vector of external fields $\bs\V=(\V_1,\ldots,\V_d)^t$ where $\V_i :\s\to\R\cup\{+\infty\}$ is defined in the following way. On $T(\Delta_i)$ define $\V_i$ from $V_i$ as in \eqref{defV1} and, if $\Delta_i$ is unbounded, extend the definition of $\V_i$ to $(0,0,1)$ using \eqref{defV2}. Then set $\V_i=+\infty$ elsewhere. Each $\V_i$ is then lower semi-continuous and finite on a set of positive capacity.  

(a)  By construction  the relation 
\begin{equation}
\label{C1}
\J_{\bs V}(\mu_1,\ldots,\mu_d)=J_{\bs \V}(T_*\mu_1,\ldots,T_*\mu_d)
\end{equation}
holds for all $(\mu_1,\ldots,\mu_d) \in\spac$, see Definition \ref{def2} and \eqref{J0}--\eqref{JV}.  As a consequence, we have 
for all $\alpha\in\R$ the relation between the sub-level sets  of $\J_{\bs V}$ and $J_{\bs\V}$
\begin{multline}
\label{C2}
T_*\times\cdots\times T_*\left(\Big\{ \bs\mu\in \spac \mid \J_{\bs V}(\bs\mu) \leq \alpha \Big\}\right)  \\
=  \Big\{ \bs\mu\in \M_{m_1}(\s)\times\cdots\times\M_{m_d}(\s) \mid J_{\bs \V}(\bs\mu) \leq \alpha \Big\}.
\end{multline}
Now we use Theorem \ref{th3} with  $C$, $\bs\V$, $\bs m$, which satisfy Assumption \ref{dataK}, and $K=\s\subset\R^3$. 
The theorem gives that $J_{\bs\V}$ has compact sub-level sets since $J_{\bs \V}$ is 
lower semi-continuous on the compact $\M_{m_1}(\s)\times\cdots\times\M_{m_d}(\s)$.  Since $T$ is an 
homeomorphism from $\C$ to $\s\setminus\{(0,0,1)\}$, it is not hard to check that $T_*$ is an homeomorphism 
from $\M_1(\C)$ to $\{\mu\in\M_1(\s) \mid \mu(\{(0,0,1)\})=0\}$,  so that part (a) follows from \eqref{C2} because a measure having a Dirac mass at $(0,0,1)$ has necessarily infinite logarithmic energy.

(b) Theorem \ref{th3} moreover yields that $J_{\bs\V}$ is strictly convex where it is finite. This clearly 
implies part (b) from  \eqref{C1} since $T_*$ is a linear injection. 
\end{proof}

\begin{remark}
\label{regularity}
A priori, the minimizer $(\mu_1,\ldots,\mu_d)$ of $\J_{\bs V}$ provided by Corollary \ref{cor1} could be such that  
\begin{equation}
\label{tail}
\int \log(1+|x|)\, d\mu_i(x)=+\infty \qquad \text{for some $i \in \{1, \ldots, d\}$}.
\end{equation}
In fact \eqref{tail} can only happen if the logarithmic potential $U^{T_*\mu_i}$ is infinite 
at the north pole of $\s$. Indeed, letting $y\to\infty$  in \eqref{metric}, we obtain for any $x\in\C$
\[
|T(x)-(0,0,1)|=\frac{1}{\sqrt{1+|x|^2}}
\] 
and thus we obtain from \eqref{pushf} the following equivalence  for any measure $\mu$ on $\C$
\[
\int \log(1+|x|)d\mu(x)=+\infty  \; \iff \;   U^{T_* \mu}(0,0,1) = +\infty.
\]
\end{remark}

\subsection*{Acknowledgments}
We would like to thank Bernhard Beckermann and Ana Matos for 
stimulating discussions on their paper \cite{BKMW} which had a big influence on
this work.

The authors are supported by FWO-Flanders projects G.0427.09 and  the Belgian Interuniversity
Attraction Pole P06/02.

The second author is also supported by FWO-Flanders project G.0641.11, 
 K.U.~Leuven research grant OT/08/33, and  research grant  MTM2011-28952-C02-01
from the Ministry of Science and Innovation of Spain and the European Regional Development Fund (ERDF).

\end{document}